\documentclass[onefignum,onetabnum]{siamart250211}



\usepackage{lipsum}
\usepackage{amsfonts}
\usepackage{graphicx}
\usepackage{epstopdf}
\usepackage{algorithmic}
\ifpdf
  \DeclareGraphicsExtensions{.eps,.pdf,.png,.jpg}
\else
  \DeclareGraphicsExtensions{.eps}
\fi


\newsiamremark{remark}{Remark}
\newsiamremark{hypothesis}{Hypothesis}
\newsiamremark{conjecture}{Conjecture}
\crefname{hypothesis}{Hypothesis}{Hypotheses}
\newsiamthm{claim}{Claim}

\headers{Numerical Conformal Mappings}{H. Hakula, A. Rasila, Y. Zheng}

\title{Efficient Numerical Conformal Mappings on Multiply Connected Riemann Surfaces}

\author{Harri Hakula\thanks{Department of Mathematics and System Analysis, Aalto University, P.O. Box 11100,
FI--00076 Aalto, Finland 
  (\email{Harri.Hakula@aalto.fi}).}
\and Antti Rasila\thanks{Department of Mathematics with Computer Science, Guangdong Technion -- Israel Institute of Technology, Shantou, Guangdong 515063, P.R. of China and Department of Mathematics with Computer Science, Technion -- Israel Institute of Technology
Haifa 32000, Israel
  (\email{antti.rasila@iki.fi}, \email{antti.rasila@gtiit.edu.cn}).}
\and Yufan Zheng\thanks{Department of Mathematics with Computer Science, Guangdong Technion -- Israel Institute of Technology, Shantou, Guangdong 515063, P.R. of China and Department of Mathematics with Computer Science, Technion -- Israel Institute of Technology
Haifa 32000, Israel
  (\email{Yufan.zheng@gtiit.edu.cn}).}
}

\usepackage{amsopn}

\makeatletter
\newcommand*{\addFileDependency}[1]{
  \typeout{(#1)}
  \@addtofilelist{#1}
  \IfFileExists{#1}{}{\typeout{No file #1.}}
}
\makeatother


\usepackage{subfig}
\usepackage{interval}
\intervalconfig{soft open fences}

\usepackage{booktabs}
\usepackage{enumitem}

\newcommand{\Sf}{\mathcal{S}} 

\newcommand{\C}{\mathbb{C}}

\graphicspath{{PDF/}{Graphics/}}

\ifpdf
\hypersetup{
  pdftitle={Efficient Numerical Conformal Mappings on Multiply Connected Riemann Surfaces},
  pdfauthor={H. Hakula, A. Rasila, Y. Zheng}
}
\fi




\begin{document}

\maketitle

\begin{abstract}
  The conjugate function method is an algorithm for numerical computation of conformal mappings for simply and multiply connected domains on surfaces.
  In this paper the conjugate
  function method, earlier used for simply connected domains, is generalized and refined to achieve the same level of accuracy on
  multiply connected planar domains and Riemann surfaces.
  The main challenge is the accurate and efficient construction of
  boundary values for the conjugate problem on multiply connected domains.
  The method relies on high-order finite element methods which allow for 
  highly accurate computations of mappings on surfaces, including domains of complex boundary geometry containing strong singularities and cusps.
  We also derive the reciprocal error estimate for the multiply connected case.
    The efficacy of the proposed method is illustrated via an extensive set of numerical experiments with error estimates.
	\end{abstract}

\begin{keywords}
  Laplace--Beltrami, numerical conformal mappings, conformal modulus,  Riemann surfaces
\end{keywords}

\begin{AMS}
  30C85, 30F10, 31A15, 65E05, 65E10, 65N30
\end{AMS}

\section{Introduction}
\label{sec:intro}

Conformal geometry has many applications such as engineering (e.g. \cite{gonzales}) 
and mathematical geodesy (see \cite{Vermeer-Rasila}). 
Traditionally, numerical methods of approximating conformal mappings have been mostly 
restricted to planar domains. Even in this setting finding
mappings between multiply connected domains has remained challenging.
M.M.S. Nasser has used the boundary integral equation (BIE) with impressive results
\cite{doi:10.1137/070711438,doi:10.1137/120901933}. Furthermore,
in a very recent work, Nasser discusses efficient implementation of  
the generalized Koebe’s iterative method \cite{LeeMuridNasserYeak2025FastKoebe}.

Koebe's iterative method can also be implemented for surfaces. Gu, Luo, and Yau give an
excellent overview of the state of the art in \cite{864afe467f9949cea8cb49d061b1b5d6}.
Due to both theoretical and implementation complexity, there are relatively few
publications addressing specifics. See, for example, \cite{Li2022,Lin2021,Lin2022,Song2014SymmetricConformal,Yueh2020b}.
Even though a properly implemented Koebe's iteration converges reasonably fast (at least in the
planar case), it is very difficult to provide any reliable error estimates or
convergence rates.

Our method of choice for conformal mappings, 
the conjugate function method, was first introduced in \cite{hqr} for simply and doubly connected planar domains. We use the $hp$-adaptive finite element method ($hp$-FEM) introduced in \cite{hrv} to compute potential functions and conformal moduli, but other methods can also be used, e.g., in \cite{hrs} the stochastic version of the conjugate function algorithm was given for circular arc domains in the plane. An advantage of $hp$-FEM is the mature error analysis. 
Interestingly, the reciprocal identity relating the conformal capacities of the primary and conjugate problems can also be viewed as an error indicator.
In this work this concept is made precise for surfaces with multiple boundary components in the specific setting of 
so-called Q-type canonical domains (see \cite{hqr2} and below).

On method development the main contribution of this work is the new, direct algorithm
for the construction of the conjugate problem for multiply connected problems.
Given the finite element discretization of the primary problem, the problem
of finding the right boundary conditions for the conjugate problem, that is,
the necessary Dirichlet-Neumann swap of the boundary conditions, can be reduced
to a quadratic minimization problem that can be solved with a linear system of equations
whose dimension is the number of the holes. Not only is the approach much faster than the
previous ones, but the accuracy of the solution of the conjugate problem is brought to the
same level as that of the primary problem. 
The same algorithm applies to problems on multiply connected domains on surfaces as well without any modifications.
In view of the results in this paper, the conjugate function method is now
comparable to the state-of-the-art methods with the additional benefits of the
high-order finite element frameworks such as advanced error estimation and exponential convergence. 
While surface finite element methods (SFEM), originating with Dziuk's finite element method 
for the Laplace--Beltrami operator and surveyed by Dziuk and Elliott \cite{Dziuk1988,DziukElliott2013}, 
are popular for general surface PDEs using low-order elements on triangulated surfaces, 
our approach leverages surface parameterization and high-order $hp$-FEM. 
This choice is motivated by the need for the extreme precision required in conformal mapping, 
where the conformal modulus and mapping must be computed to many significant digits, 
often in the presence of singularities where $hp$-adaptivity provides a significant 
advantage over standard low-order methods.

\subsection{Illustrative Example: 3D Face Recognition}\label{sec:appetizer}

Face recognition has many applications, especially in biometrics. Since a standard 2D
image recognition is fragile due to many factors, 3D methods have been proposed as a more reliable
alternative. Each 3D surface with suitable topology can be mapped conformally to a 2D domain, and thus, the image recognition becomes a conformal map comparison problem. See
\cite{szeptycki2010conformal}.

In Figure~\ref{fig:appetizer} one example of how such a process might work is illustrated. 
First, four points on the outer boundary are fixed. In our formulation, the canonical domain is
a quadrilateral with horizontal slits if holes are present. 
Second, the Laplace-Beltrami equation is solved twice with different boundary conditions. 
As mentioned above,
this process is only marginally slower if multiple holes are present. 
In the example, both eyes and the mouth are modeled as holes.

\begin{figure}
  \centering
  \subfloat{
  \includegraphics[height=2.5in]{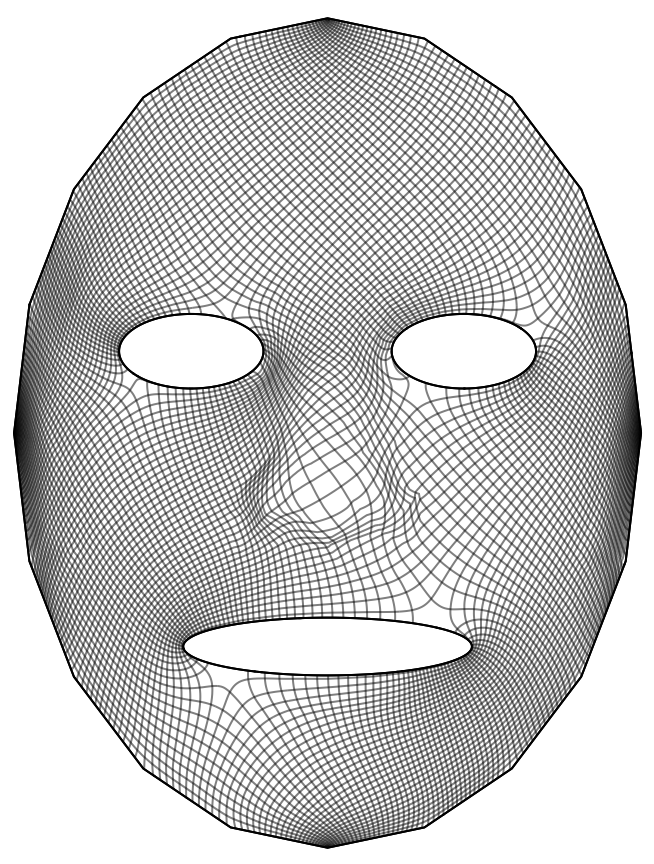}
  }
  \subfloat{
  \includegraphics[height=2.5in]{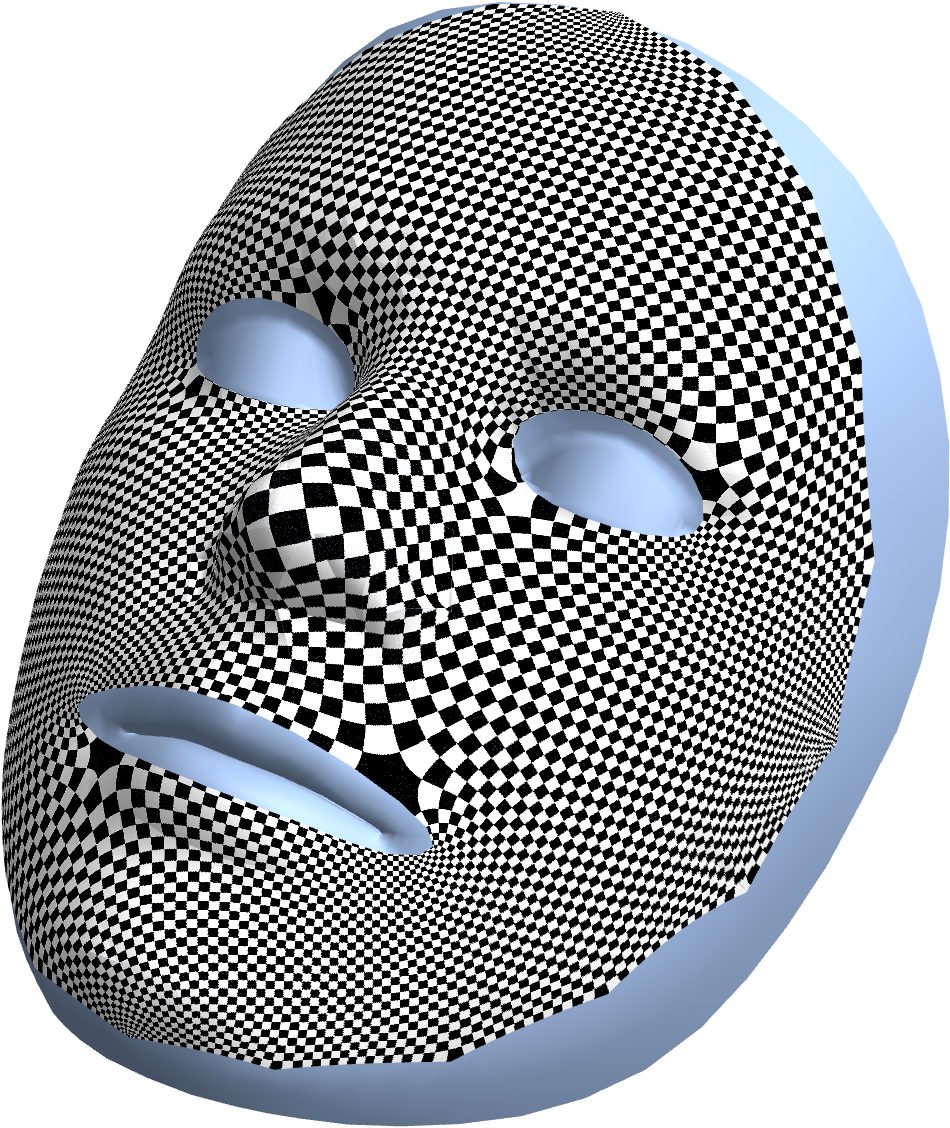}
  }
  \caption{Map of a face. Left: The map on the parameter space. Right: Checkerboard illustration of the
map on the surface. Original data source: \cite{snap2025facemesh}.} \label{fig:appetizer}
\end{figure}

\subsection{Organization}
The rest of the paper is organized as follows: 
After the introduction, preliminaries including the finite element method used are presented.
In Section~\ref{sec:cfms} the conjugate function method is introduced.
Section~\ref{sec:conjugateConstruction} discusses both the proof of the reciprocal
identity for oriented surfaces, and the efficient construction of the conjugate
problem.
Torii are given as a special class of closed surfaces in Section~\ref{sec:torii}.
Sections~\ref{sec:numex}-\ref{sec:polyhedral} cover series of numerical experiments
and applications, before conclusions at the end. 

\section{Preliminaries}
In this section, some basic concepts and definitions used throughout this paper are introduced. 

\subsection{Riemann surfaces and the Laplace-Beltrami Operator}
Let $X$ be a Hausdorff space that is locally homeomorphic to the Euclidean plane $\mathbb{R}^2$, which we also consider as a one-dimensional complex manifold. We assume that  its boundary $\Gamma$ is either empty or consists of a finite number of rectifiable arcs that may intersect only at their endpoints, and its closure $X\cup \Gamma$ is compact. A complex {\it chart} on $X$ is a homeomorphism $\varphi: U \to V$ from an open subset $U \subset X$ onto an open subset $V \subset \mathbb{C}$. Two complex charts $\varphi_j: U_j \to V_j$ (for $j = 1, 2$) are said to be holomorphically compatible if $U_1 \cap U_2=\emptyset$ or the mapping
$$
\varphi_2 \circ \varphi_1^{-1} : \varphi_1(U_1 \cap U_2) \to \varphi_2(U_1 \cap U_2)
$$
is conformal (biholomorphic). The collection of compatible charts covering the whole space $X$ is called an {\it atlas} of the manifold. We call manifolds with an atlas of holomorphically compatible charts a {\it Riemann surface}. 

Our task is to define the Laplacian on some surface $S$, that is, the Laplace-Beltrami operator $\Delta_S$, in a form suitable for finite element implementation.
In our setting, the surface is always assumed to be
given in some parameterized form. Let $\mathbf{x}_S : \Gamma \to S$ be a parameterization
of a surface $S$. The goal is to treat $\Gamma \subset \mathbb{R}^2$ as the reference domain on
which the finite elements are defined. Let $J_\mathbf{x}$ be the Jacobian of the mapping, and
hence $G_S = J_\mathbf{x}^T J_\mathbf{x}$ is the first fundamental form.

The tangential gradient of some function $v : S\to \mathbb{R}$ is
\begin{equation}
  (\nabla_S v)\circ \mathbf{x}_S := J_\mathbf{x} G_S^{-1} \nabla(v \circ \mathbf{x}_S),
\end{equation}
and $\Delta_S := \nabla_S \cdot \nabla_S$. The variational formulation on an image $K$ of a given element $T$ in a discretization of $\Gamma$ becomes
\begin{align}\label{eq:femvar}
\int_K \nabla_{K}\psi\cdot\nabla_{K} v\,dx &= \int_{T} \nabla(\psi \circ \mathbf{x}_K)^T  G_K^{-T} G_S G_K^{-1} \nabla(v \circ \mathbf{x}_K) \sqrt{\operatorname{det}(G_K)}d\tau.
\end{align}

\subsection{High-Order Finite Element Method}\label{sec:hpfem}
High-order finite element methods ($hp$-FEM) provide exponential convergence for conformal capacity problems if the discretization correctly reflects boundary singularities. We denote the weak solution of the Dirichlet-Neumann problem by $u_0$.
The following result by Babuška and Guo \cite{BaGuo1,BaGuo2} provides the theoretical foundation for our choice of method.

\begin{theorem} 
Let $\Omega \subset \mathbb{R}^2$ be a polygon and let $u_0$ be the weak solution in $H^1(\Omega)$. If $u_0$ belongs to a suitable countably normed space $B^2_\beta(\Omega)$ (where derivatives of high order are weighted by distance to singularities), then the $hp$-FEM solution $v$ satisfies:
\[
\inf_v \|u_0 - v\|_{H^1(\Omega)} \leq C\,\exp(-b \sqrt[3]{N}),
\]
where $N$ is the number of degrees of freedom. This bound holds for proper geometric meshes, where elements are refined geometrically towards singularities and polynomial degrees are increased linearly.
\end{theorem}

\subsubsection{Error Estimation}
The a posteriori error estimation method
specific to the solution method  applied here
is the so-called auxiliary subspace error estimation.
The estimate measures the projection of the residual to the auxiliary space,
that is, to a set of finite element degrees of freedom that extend
the approximation space in a natural way. One example of such extension
is to consider all edge modes of degree $p+1$ and bubble modes of 
$p+1$ and $p+2$, where $p$ is the (constant) polynomial degree used in the discretization.
For details, see \cite{hno}.

First, $p$-robustness has never been rigorously shown for this class of
error estimators. Yet, there exists compelling
numerical evidence that the method is indeed $p$-robust.
Second, by construction, these estimators are optimistic.
If properly constructed, they cannot overestimate the error.

\subsubsection{Special Types of Boundaries}\label{sec:refine}
The computational domains can be defined in different ways, either exactly in some parameterized
form or using some given discretization.
Our approach for geometric grading of the meshes 
with known locations of singularities is based on rule based algorithms \cite{Hakula2013}. 

One non-standard boundary type in the context of finite element method 
considered here is the slit. We define a slit to be a set of element nodes and edges
that form a loop with zero area. That is, the data structures support having
two edges on top of each other, both with their own kinematic constraints.  

\section{The Conjugate Function Method On Planar Domains and Riemann Surfaces}
\label{sec:cfms}

Let $\Omega\subset \Sf$ be a simply connected domain on a Riemann surface $\Sf$ so that the boundary of $\Omega$ is a Jordan curve.
We call $\Omega$ together with four positively oriented points $z_1,z_2,z_3,z_4\in \partial \Omega$ a (generalized) {\it quadrilateral} and denote it by $Q=(\Omega; z_1,z_2,z_3,z_4)$. The boundary segments connecting the pairs of points $(z_j,z_{j+1})$ for $j=1,2,3$, and $(z_4,z_1)$ for $j=4$, respectively, are denoted by $\gamma_j$. 

It is well-known (see e.g. \cite{Ahlfors}) that there exists a unique number $h>0$ called the {\it conformal modulus} of $Q$, such that there exists a conformal mapping of the rectangle $R_h=[0,1]\times[0,h]\subset \C$ onto $\Omega$, with boundary points $z_1,z_2,z_3,z_4$ corresponding to the images of the points $0,1,1+ih,ih$, respectively. The conformal modulus determines the conformal equivalence class of $\Omega$ in the sense that there exists a conformal mapping between quadrilaterals (with boundary point correspondence) if and only if they have the same modulus. In the following the conformal modulus of a quadrilateral $Q$ is denoted by $M(Q)$.

In this study, the conformal modulus of a quadrilateral is computed
via its connection to the Laplacian.
Recall that there exists a (unique) harmonic solution $u$ to the following Dirichlet-Neumann mixed boundary value problem:
\begin{equation}
\label{eq:dirneu}
\left\{
\begin{array}{rcl}
    \Delta_S u(z) = 0 & \text{for} & z \in \Omega, \\
    u(z) = 0 & \text{for} & z \in \gamma_2, \\
    u(z) = 1 & \text{for} & z \in \gamma_4, \\
    \partial u(z)/\partial n = 0 & \text{for} & z \in \gamma_1 \cup \gamma_3,
\end{array}
\right.
\end{equation}
where $n$ is the unit exterior boundary normal and
$\Delta_S$ is the Laplace-Beltrami operator. For $\Omega \subset \mathbb{C}$ the conformal modulus is connected to the above boundary value problem  by the identity (see e.g. Ahlfors \cite[Theorem~4.5]{Ahlfors} and Papamichael and Stylianopoulos \cite[Theorem~2.3.3]{PapamichaelStylianopoulos2010}):
\begin{equation}
\label{mod-dir}
M(Q)=\iint_{\Omega}|\nabla u|^2 \, dx \, dy.
\end{equation}

For a quadrilateral $Q=(\Omega; z_1,z_2,z_3,z_4)$ we call $\widetilde{Q}=(\Omega; z_2, z_3, z_4, z_1)$ its {\it conjugate quadrilateral} and the corresponding problem \eqref{eq:dirneu} for the quadrilateral $\widetilde{Q}$ the {\it conjugate Dirichlet--Neumann problem}. It is well-known that if $M(Q)=h>0$, then $M(\widetilde{Q})=1/h$, which leads to \begin{equation}
\label{eq:resiproc}
M(Q) M(\widetilde{Q}) = 1,
\end{equation}
for all quadrilaterals $Q$. This {\it reciprocal identity} is very useful since
it can be interpreted as an error estimate \cite{hrv}.
\begin{remark}[Reciprocal identity as an error estimate]
It is immediately clear that the identity is a necessary but not sufficient condition.
In the context of this paper it is important to notice that the same discretization is
used for both the primary and the conjugate problem. Since the main source of numerical
error arises from the Dirichlet-Neumann interaction in the corners, say, the underlying
discretisation is always in this sense symmetric. Within finite element methods
for the two solutions to have different convergence rates is highly unlikely.
The respective constants may vary, of course.
\end{remark}
Furthermore, we  may observe that the canonical conformal mapping of a quadrilateral $Q = (\Omega; z_1, z_2, z_3, z_4)$ onto the rectangle $R_h$ with vertices at $1+i h$, $i h$, $0$, and $1$, can be obtained by solving the corresponding Dirichlet--Neumann problem and its conjugate problem.

\begin{lemma}
Let $Q$ be a quadrilateral with modulus $h$, and suppose $u$ solves the Dirichlet--Neumann problem \eqref{eq:dirneu}. If $v$ is a harmonic function conjugate to $u$, satisfying $v(\operatorname{Re} z_3, \operatorname{Im} z_3)=0$, and $\tilde{u}$ represents the harmonic function solving the Dirichlet--Neumann problem for the conjugate quadrilateral $\widetilde{Q}$, then $v = h \tilde{u}$.
\end{lemma}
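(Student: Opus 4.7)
The plan is to show that $v/h$ satisfies the same Dirichlet--Neumann problem as $\tilde u$ on the conjugate quadrilateral $\widetilde Q$, and then invoke uniqueness. Because $\Omega$ is simply connected, $u$ has a globally defined harmonic conjugate $v$, unique up to an additive constant, pinned down by the normalization $v(\mathrm{Re}\,z_3,\mathrm{Im}\,z_3)=0$. The argument is local in conformal charts, so the same reasoning transfers verbatim to the Riemann surface setting.

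The first real step is to translate the boundary data using the Cauchy--Riemann relations expressed in the boundary-adapted frame: with $t$ the positively oriented tangent and $n$ the outward normal, these read (up to the sign fixed by the orientation convention) $\partial_t v = \partial_n u$ and $\partial_n v = -\partial_t u$ on $\partial\Omega$. Since $u$ is constant on $\gamma_2$ and $\gamma_4$, one gets $\partial_t u=0$, hence $\partial_n v=0$ on $\gamma_2\cup\gamma_4$. Since $\partial_n u=0$ on $\gamma_1\cup\gamma_3$, one gets $\partial_t v=0$, so $v$ is constant on each of the arcs $\gamma_1,\gamma_3$ separately. The normalization forces $v\equiv 0$ on $\gamma_3$, while on $\gamma_1$ we have $v\equiv c$ for some constant $c$ still to be identified.

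The main step, and the only place nontrivial content enters, is identifying $c=h$. Following the boundary from a point of $\gamma_3$ to a point of $\gamma_1$ via $\gamma_4$, the net change of $v$ equals $\int_{\gamma_4}\partial_t v\,ds = \int_{\gamma_4}\partial_n u\,ds$, which is the flux of $u$ across $\gamma_4$. Green's identity, combined with $\Delta_S u=0$, $u=1$ on $\gamma_4$, $u=0$ on $\gamma_2$, and $\partial_n u=0$ on $\gamma_1\cup\gamma_3$, collapses the Dirichlet energy to exactly this single boundary integral, $\iint_\Omega|\nabla u|^2 = \int_{\gamma_4}\partial_n u\,ds$. By \eqref{mod-dir} the left-hand side is $M(Q)=h$, so $c=h$; that the sign comes out positive rather than negative is exactly where the positive orientation of $z_1,z_2,z_3,z_4$ is used.

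Setting $w=v/h$, we find $w$ is harmonic with $w=0$ on $\gamma_3$, $w=1$ on $\gamma_1$, and $\partial_n w=0$ on $\gamma_2\cup\gamma_4$. Under the relabelling encoded by $\widetilde Q=(\Omega;z_2,z_3,z_4,z_1)$, the four arcs become $(\tilde\gamma_1,\tilde\gamma_2,\tilde\gamma_3,\tilde\gamma_4)=(\gamma_2,\gamma_3,\gamma_4,\gamma_1)$, so the conditions on $w$ are precisely the Dirichlet--Neumann problem \eqref{eq:dirneu} for $\widetilde Q$. By uniqueness of the solution, $w=\tilde u$, i.e.\ $v=h\tilde u$. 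The hard part is the flux/modulus identification; the rest is the Cauchy--Riemann bookkeeping that swaps Dirichlet and Neumann data on opposite pairs of arcs.
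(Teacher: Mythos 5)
Your argument is correct. Note that the paper itself states this lemma without any proof (it is treated as a known fact, in the spirit of the cited references of Ahlfors and Papamichael--Stylianopoulos), so there is no in-paper argument to compare against; your write-up supplies the standard proof and does so soundly. The structure is the right one: Cauchy--Riemann in the tangent/normal frame swaps the Dirichlet and Neumann arcs, the normalization at $z_3$ kills the constant on $\gamma_3$, and the identification of the constant on $\gamma_1$ via $\int_{\gamma_4}\partial_n u\,ds=\iint_\Omega|\nabla u|^2=h$ (Green's identity with $u=1$ on $\gamma_4$, $u=0$ on $\gamma_2$, and vanishing flux on $\gamma_1\cup\gamma_3$) is exactly the nontrivial step; one can confirm the sign conventions against the model case $R_h$ with $u=1-x$, $v=h-y$. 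The only points you gloss over, acceptably at this level, are the continuity of $v$ up to the closed arcs (needed to pass from $v(z_3)=0$ to $v\equiv 0$ on $\gamma_3$) and the uniqueness of the mixed Dirichlet--Neumann solution that you invoke at the end, which the paper also takes for granted.
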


\subsection{Q-type canonical domains}
Let $h>0$, and let $ \big((\zeta_1, d_1),\ldots,(\zeta_N, d_N)\big)$ be complex numbers such that $\operatorname{Re} \zeta_i\in(0,1)$ and $\operatorname{Im} \zeta_i\in(0,h)$ for all $i=1,\ldots, N$, where $N\ge 1$. For the case of multiply connected planar domains, we will consider {\it canonical domains} of the type
\begin{equation}
\label{can-multidom}
\big((0,1)\times (0,h)\big)
\setminus
\bigcup_{j=1}^N
[\zeta_j,\zeta_j+d_j] \subset \mathbb{C},
\end{equation}
i.e., a rectangle $(1, 0) \times (0, h)$ with $N$ horizontal slits removed. Canonical domains of this type are called {\it quadrilateral-like} (or {\it Q-type}) in \cite{hqr2}.

\section{Fast Construction of the Conjugate Problem}\label{sec:conjugateConstruction}
The starting point is the quadrilateral $Q$. In the multiply connected case
there may be a finite number of holes each with its own boundary $\partial E_i$.
Initially, each interior boundary will have a zero Neumann boundary condition: 
$\partial u/\partial n=0$ on every $\partial E_i$. The task is to construct the
conjugate problem where each of the $\partial E_i$ is set to some 
constant potential $v_i$, in other words, a corresponding Dirichlet boundary condition is defined,
$v = v_i, \text{ on } \partial E_i,\ \forall i$.

The overall solution process can be compressed to three steps: 1. Solve the primary problem, 2. Construct the conjugate problem, 3. Solve the conjugate problem. That is, considering the primary problem
\begin{equation}
\label{multi-bvals}
  \begin{cases}
    \Delta_S u = 0,\quad \text{ in } \Omega, \\
    u = 0, \quad \text{ on } \gamma_1,  \\
    u = 1, \quad \text{ on } \gamma_3, \\
    \partial u/\partial n = 0, \quad  \text{ on } \gamma_2, \gamma_4, \\
    \partial u/\partial n = 0, \quad  \text{ on } \partial E_i,\ \forall i, \\
  \end{cases} \text{ leading to }
    \begin{cases}
    \Delta_S v = 0,\quad \text{ in } \Omega, \\
    v = 0, \quad \text{ on } \gamma_2, \\ 
    v = 1, \quad \text{ on } \gamma_4, \\ 
    v = v_i, \quad \text{ on } \partial E_i,\ \forall i, \\
    \partial v/\partial n = 0, \quad  \text{ on } \gamma_1, \gamma_3. 
  \end{cases}
\end{equation}

The values $v_i$ are not prescribed by the geometry alone. They are chosen so that the
Dirichlet energy of the conjugate problem is minimized subject to the fixed values
on $\gamma_2$ and $\gamma_4$. Equivalently, after the primary modulus has been
computed, the reciprocal identity discussed in the next subsection identifies the
correct conjugate modulus as $1/M(Q_*)$.
In our previous work, this task has been completed using optimization, which has not only 
been computationally expensive but has also led to unavoidable loss of accuracy 
when quadratic objective functions have been used. 
Remarkably, using the matrix representation of the discretization of the problem 
and the energy minimization criterion, one
can derive a two-step method for finding the minimizing Dirichlet boundary conditions
for the conjugate problem.

\subsection{Reciprocal Identity for Multiply Connected Domains}
We note that when the boundary conditions given by \eqref{multi-bvals} are applied to a Q-type canonical domain given by \eqref{can-multidom}, then we obtain the following natural generalization of the modulus of quadrilateral.

Let $\Omega$ be a domain of the type \eqref{can-multidom} on an oriented surface $S$. Denote by $Q_*=(\Omega;z_1,z_2,z_3,z_4)$ the Q-type domain with points $z_1,z_2,z_3,z_4$ chosen in a positive order from the same boundary component. Then the conformal modulus of $Q_*$ and its modulus and $\tilde Q_*=(\Omega;z_2,z_3,z_4,z_1)$, the conjugate modulus, are defined by 
\begin{equation}
M(Q_*)=\iint_{\Omega}|\nabla u|^2 \, dx \, dy,
\quad
M(\tilde Q_*)=\iint_{\Omega}|\nabla v|^2 \, dx \, dy,
\end{equation}
where $u,v$, respectively, are harmonic solutions to the boundary value problems given by \eqref{multi-bvals}.

These definitions are natural generalizations of the conformal modulus of a quadrilateral for multiply connected domains, and the classical definition discussed in Section \ref{sec:cfms} can be understood as their special cases. Furthermore, they admit the following very useful generalization of the reciprocal identity \eqref{eq:resiproc}:

\begin{proposition}
Let $\Omega$ be a domain on an oriented surface $S$ that can be mapped conformally onto a Q-type canonical domain $Q_*=(\Omega; z_1,z_2,z_3,z_4)$ defined by \eqref{can-multidom}, where the points $z_1,z_2,z_3,z_4$ are chosen from the same boundary component of $\Omega$. Then the generalized conformal moduli of $\Omega$ satisfy the reciprocal identity:
\begin{equation}
M(Q_*)M(\tilde Q_*)=1.
\end{equation}
\end{proposition}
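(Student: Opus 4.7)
The plan is to reduce the identity to an explicit computation on the planar Q-type canonical domain by exploiting the conformal invariance of the Dirichlet energy in two dimensions. By hypothesis there is a conformal map $\phi\colon \Omega \to \Omega_0$ onto a canonical domain
\[
\Omega_0 = \big((0,1)\times(0,h)\big)\setminus \bigcup_{j=1}^N [\zeta_j,\zeta_j+d_j],
\]
and since $\int_\Omega |\nabla_S u|^2\, dA_S = \int_{\Omega_0}|\nabla(u\circ\phi^{-1})|^2\, dx\, dy$ for every admissible $u$, both the primary and the conjugate mixed boundary value problems in \eqref{multi-bvals} transport to the analogous problems on $\Omega_0$ with identical Dirichlet energies. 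It therefore suffices to prove the reciprocal identity on $\Omega_0$ itself.

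On $\Omega_0$ the primary problem is solved explicitly by $u(x,y)=x$: this function is harmonic, attains the Dirichlet data on the two vertical edges, and satisfies $\partial u/\partial n = \partial u/\partial y = 0$ on the top, the bottom, and every horizontal slit. Because the slits have Lebesgue measure zero,
\[
M(Q_*) = \iint_{\Omega_0}|\nabla u|^2\, dx\, dy = \operatorname{Area}(\Omega_0) = h.
\]
For the conjugate problem I propose the candidate $v(x,y) = y/h$, which is harmonic, matches the Dirichlet data on the top and bottom edges, has vanishing normal derivative on the vertical sides, and is constant on each horizontal slit with value $v_j := \operatorname{Im}(\zeta_j)/h$. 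Reading off the energy gives
\[
M(\tilde Q_*) = \iint_{\Omega_0}|\nabla v|^2\, dx\, dy = \frac{1}{h^2}\cdot h = \frac{1}{h},
\]
and multiplying yields $M(Q_*)\,M(\tilde Q_*)=1$.

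The one delicate step is the justification that the candidate $v=y/h$ really is the solution of the conjugate problem, i.e.\ that the free constants $v_j$ appearing in \eqref{multi-bvals} coincide with the values $\operatorname{Im}(\zeta_j)/h$ produced above. I plan to settle this via the same characterization implicit in the simply connected reciprocal identity \eqref{eq:resiproc}: the $v_j$ are precisely the constants forced by requiring $v$ to be the single-valued harmonic conjugate of $u$ on the multiply connected domain $\Omega_0$. For $u=x$ the conjugate $1$-form satisfies $\ast du = dy$, so every period around a loop enclosing a slit vanishes, the harmonic conjugate is globally single-valued and equal to $y$ up to an additive constant, and the normalization by $1/h$ to match the horizontal Dirichlet data singles out $v=y/h$. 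This uniqueness step, which ties the free constants of the discrete conjugate system to period integrals, is the main obstacle; once in hand the rest of the proof is the direct energy computation above.
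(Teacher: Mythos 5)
Your proposal is correct and follows essentially the same route as the paper: reduce to the planar canonical domain via conformal invariance of the Dirichlet energy, then read off the explicit solutions $u(x,y)=x$ and $v(x,y)=y/h$ and multiply the energies $h$ and $1/h$. The only difference is that you explicitly justify that the slit constants $\operatorname{Im}(\zeta_j)/h$ are the ones actually selected by the conjugate problem (via the vanishing periods of $\ast du$, equivalently the zero-flux/energy-minimization condition on each $\partial E_j$), a point the paper's proof passes over in silence; this is a welcome extra check rather than a divergence in method.
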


\begin{proof}
First recall that the Dirichlet energy integral
\begin{equation}
\iint_{\Omega}|\nabla u|^2 \, dx \, dy
\end{equation}
in \eqref{mod-dir} is a well-known conformal invariant in the plane, and this property extends also to conformal mappings between Riemann surfaces (see, e.g., \cite[Section 3.6]{JostRiemann}). Because the boundary data in \eqref{multi-bvals} is preserved under conformal mappings, it is sufficient to show that this identity is true for the canonical domains given by \eqref{can-multidom}.

But in this case, the solution of the boundary value problem \eqref{multi-bvals} is given by the harmonic functions $u(x,y)=x$ and $v(x,y)=y/h$. Therefore, the desired identity follows immediately.
\end{proof}

\begin{remark}
Previously, the use of reciprocal identity as a  method of error estimation has been verified only in the simply connected case \cite{hqr}.
\end{remark}

\subsection{Linear Algebra Based Construction of the Conjugate Problem}
As indicated in \eqref{multi-bvals}, the discretized primary and conjugate
problems share the same degrees of freedom before application of boundary conditions.
The degrees of freedom admit a partition that can be used to define
a block structure for the discretized matrix $A$.
For the conjugate problem the degrees of freedom on 
the four boundary segments $\gamma_i$ 
defining the quadrilateral $Q$ are denoted by $D_0, D_1, N^0$, and $N^1$,
where $D_0$ indicates $v=0$ and $N^0$ zero Neumann on the segment with $u=0$
on the primary problem.
The internal degrees of freedom are denoted by $B$.
Finally, the degrees of freedom on the boundaries of the holes are 
denoted by $E_i$ .
\begin{remark}
  Since the potential is constant over the boundary of every hole in the conjugate problem,
  all edge degrees of freedom are set to zero. This is a technical detail that does not affect the discussion below.
\end{remark}
Using the notation given above the discretized system has the following block structure
\[
A =
\begin{pmatrix}
A_{BB}     & A_{B N^1} & A_{B N^0} & A_{B D_1} & A_{B D_0}        & A_{B E_1} & \dots &   A_{B E_n} \\
A_{N^1 B} & A_{N^1 N^1} & A_{N^1 N^0} & A_{N^1 D_1} & A_{N^1 D_0} & A_{N^1 E_1} & &   A_{N^1 E_n}\\
A_{N^0 B} & A_{N^0 N^1} & A_{N^0 N^0} & A_{N^0 D_1} & A_{N^0 D_0} & A_{N^0 E_1} & &   A_{N^0 E_n}\\
A_{D_1 B} & A_{D_1 N^1} & A_{D_1 N^0} & A_{D_1 D_1} & A_{D_1 D_0} & A_{D_1 E_1} & &   A_{D_1 E_n}\\
A_{D_0 B} & A_{D_0 N^1}  & A_{D_0 N^0} & A_{D_0 D_1} & A_{D_0 D_0}& A_{D_0 E_1} & &   A_{D_0 E_n}\\
A_{E_1 B} & A_{E_1 N^1}  & A_{E_1 N^0} & A_{E_1 D_1} & A_{E_1 D_0}& A_{E_1 E_1} & &   A_{E_1 E_n}\\
\vdots &   &  &  &  &  & \ddots \\
A_{E_n B} & A_{E_n N^1}  & A_{E_n N^0} & A_{E_n D_1} & A_{E_n D_0}& A_{E_n E_1} & \dots &   A_{E_n E_n} \\
\end{pmatrix}.
\]
The solution vector $x$ (or the potential $v$) has a similar structure
\[
x = \begin{pmatrix}
x_{B} & x_{N^1}  & x_{N^0} & x_{D_1} & x_{D_0}& x_{E_1} & \dots &   x_{E_n}  
\end{pmatrix}^T.
\]
In order to derive the quadratic form we further divide the degrees of freedom into two classes: free ($B$, $N^0$, $N^1$) and fixed (all Dirichlet).
These classes are denoted by $I$ and $D$, respectively. 
Then the matrix $A$ has a simple $2\times 2$ form
$
A=\left(\begin{smallmatrix}
A_{II} & A_{ID} \\
A_{DI} & A_{DD} \\
\end{smallmatrix}\right)
$, and $x = \left(\begin{smallmatrix}
x_{I} & x_{D}  
\end{smallmatrix}\right)^T$. Using the block form one can solve (formally) the free degrees of freedom
\[
A_{II}x_I = -A_{ID} x_D, \quad
x_I = -A_{II}^{-1} A_{ID} x_D.
\]
Therefore, one can simplify the quadratic form
\begin{equation}\label{eq:conjquadratic}
Q_F = \begin{pmatrix}
x_{I}\\
x_{D} \\
\end{pmatrix}^T
\begin{pmatrix}
A_{II} & A_{ID} \\
A_{DI} & A_{DD} \\
\end{pmatrix}\begin{pmatrix}
x_{I}\\
x_{D} \\
\end{pmatrix} =
x_D^T A_{DD}x_D - x_D^T A_{DI} A_{II}^{-1} A_{ID} x_D.
\end{equation}
Let us next denote the vector of unknown parameter values 
$
x'_D = \left(\begin{smallmatrix}
v_1 & \dots &   v_n  
\end{smallmatrix}\right)^T.
$
Using the fact that every nonzero degree of freedom on $E_i$ is constant
and equal to $v_i$,
we arrive at the quadratic minimization problem
\begin{equation}
  \text{min}_{x'_D \in \mathbb{R}^n} \{{x'_D}^T K x'_D- b^T x'_D\},
\end{equation}
where $K$ and $b$ are the coefficient matrix (symmetric) and vector, respectively, 
extracted from \eqref{eq:conjquadratic}. We conclude that the problem of
finding the Dirichlet boundary values of the holes reduces to finding
a solution to an $n\times n$ linear system of equations.
Of course, this construction requires that the necessary bookkeeping of degrees of freedom
is available.

\subsection{Details of the Reduction Step}

The Dirichlet vector $x_D$ has a natural partition where the zero Dirichlet part $x_{D_0}$ is omitted,
with lengths $k_0,k_1,\ldots,k_n$ such that 
$m~=~\sum_{j_0}^n k_j$:
\[
x_D = \begin{pmatrix}
x_{D_1} & x_{E_1} & \dots &   x_{E_n}  
\end{pmatrix}^T.
\]

The idea is to construct a reduction matrix $R$ which maps, in other words, sums,
all coefficients of the dofs $x_{E_j}$ to those of $v_j$.
The matrix $R \in \mathbb{R}^{m\times (n+1)}$ has the form
\[
R = \begin{pmatrix}
\mathbf{1}_{k_0} & 0_{k_0} & \cdots &  0_{k_0} \\
0_{k_1} & \mathbf{1}_{k_1} &  &  0_{k_1} \\
\vdots & & \ddots & \vdots \\
0_{k_{n}}  & 0_{k_{n}} &\cdots &\mathbf{1}_{k_n}
\end{pmatrix}.
\]

The matrix $A_{DD}$ is block diagonal, thus $K_0 = R^T A_{DD} R$ is a diagonal matrix.
The second part $K_1 = R^T A_{DI} A_{II}^{-1} A_{ID} R$ is a symmetric matrix.
Then, using the colon notation, we get
\[
b = K_1(1,2:)^T, \qquad K = K_1(2:,2:)-K_0(2:,2:).
\]
The constant term does not affect the minimization.

\subsection{Effect of the Improved Construction: Two Holes on a Sphere Revisited}
In our earlier work \cite{24M1656840} we considered a problem with two holes on a sphere.
There the conjugate problem was constructed using our original method that was constrained
by the optimization process. 
Here we first apply the quadratic minimization and, indeed, obtain more accurate results 
using exactly the same finite element discretization. 
This is illustrated in Figure~\ref{fig:twoholes}.
What is even more remarkable is that this direct construction has a well-defined
a priori computational complexity that is often significantly smaller than
that of standard optimization. 

The precise description of the problem on the parameter plane is as indicated in 
Table~\ref{tbl:twoholoes}. 
The moduli obtained with the new construction of the conjugate problem are
\begin{equation}
  M(Q) = 0.7901908, \; M(\widetilde{Q}) = 1.2655173,
\end{equation}
with $\operatorname{reci}(Q) = 7 \times 10^{-8}$.

\begin{table}
  \caption{Two Holes: Unit circle centered at $(1/2,1/2)$. The locations, radii, and the obtained potential
values in the construction of the conjugate problem are given for both holes. }\label{tbl:twoholoes}
  \centering
  \begin{tabular}{llll}
    \toprule
    Hole & Center & Radius & $v_k$ on $\partial B_k$  \\ \midrule
    $B_1$ & $(1/4,1/4)$ & $1/4$ & $0.5344459257688663$ \\ 
    $B_2$ & $(3/4,3/4)$ & $1/4$ & $0.4883797948571419$ \\ \bottomrule
  \end{tabular}
\end{table}

\begin{figure}
  \centering
  \includegraphics[width=0.55\textwidth]{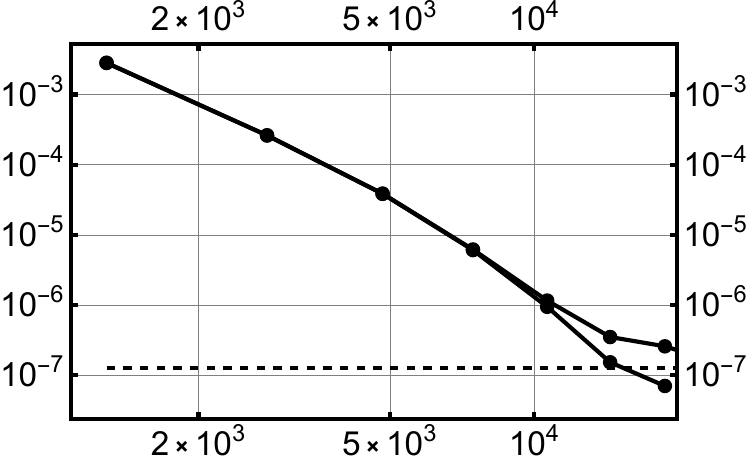}
  \caption{Two Holes Revisited. Convergence of the reciprocal error as a function of degrees of freedom
  using optimization and the new construction. 
  The current version is eventually more accurate on exactly the same mesh, and dips below
  the threshold limit of $10^{-7}$ that indicates the best possible accuracy when the black-box optimization routine was used (loglog-plot) \cite{24M1656840}.}
  \label{fig:twoholes}
\end{figure}

\section{Torii} \label{sec:torii}
The  conjugate function method is not directly applicable
on closed surfaces of genus 1 or higher.
However, if the parameter domain can be decomposed
into admissible parts, then the conformal map can be constructed by considering each
part independently. These surfaces appear frequently in soft matter physics \cite{Noguchi2015ToroidalVesicles}.
\subsection{Torus: Genus 1}
For a regular torus the parameter domain is simply $[0,2\pi]\times [0,2\pi]$.
Due to symmetry, one admissible partition is to divide the
parameter domain into four parts. The resulting map and the convergence graph
are shown in Figure~\ref{fig:torus}.
The parameterisation of the torus, with inner radius $r = 1/2$, and outer radius 
$R = 3/2$ is
\begin{equation}
  T(u,v)=\left(
  \left(\frac{\cos (u)}{2}+1\right) \cos (v),\left(\frac{\cos (u)}{2}+1\right) \sin
   (v),\frac{\sin (u)}{2}
   \right).
\end{equation}
It is known that this parameterisation is not isothermal. 
This is immediately visible in
Figure~\ref{fig:torusC}. For $T(u,v)$, one has
\[
J_\mathbf{x} =\left(
\begin{array}{cc}
 -\frac{1}{2} \sin (u) \cos (v) & -\left(\left(\frac{\cos (u)}{2}+1\right) \sin
   (v)\right) \\
 -\frac{1}{2} \sin (u) \sin (v) & \left(\frac{\cos (u)}{2}+1\right) \cos (v) \\
 \frac{\cos (u)}{2} & 0 \\
\end{array}
\right),
\]
\[
G_K^{-1}=\left(
\begin{array}{cc}
 4 & 0 \\
 0 & \frac{4}{\cos ^2(u)+4 \cos (u)+4} \\
\end{array}
\right),
\]
\[
\sqrt{\operatorname{det}(G_K)} =\frac{1}{4} \left(\cos(u)+2\right).
\]
resulting in
\begin{equation}\label{eq:nonisothermal}
G_K^{-T} G_S G_K^{-1} \sqrt{\operatorname{det}(G_K)} = \left(
\begin{array}{cc}
 4 & 0 \\
 0 & \frac{4}{(\cos (u)+2)^2} \\
\end{array}
\right) \neq I.
\end{equation}
Indeed, with a suitable parameterization of $v$ one can find an isothermal
parameterisation for a torus.

\begin{figure}
  \centering
  \includegraphics[width=0.55\textwidth]{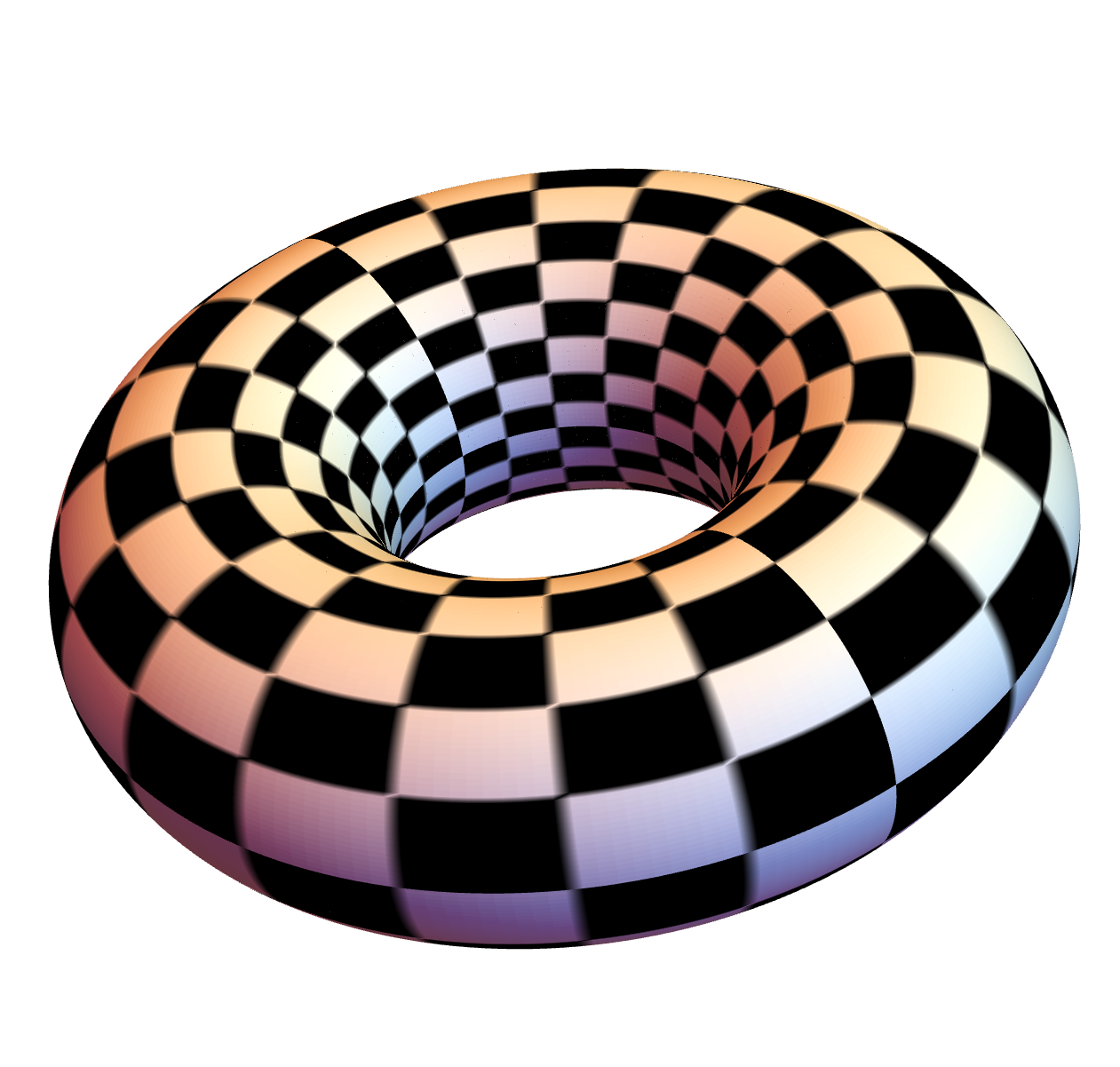}
  \caption{Torus with checkerboard colouring of the conformal map.}\label{fig:torusCB}
\end{figure}

\begin{figure}
  \centering
  \subfloat[{$(u,v) \in [0,\pi]\times [0,\pi]$}]{\label{fig:torusC}
  \includegraphics[height=1.5in]{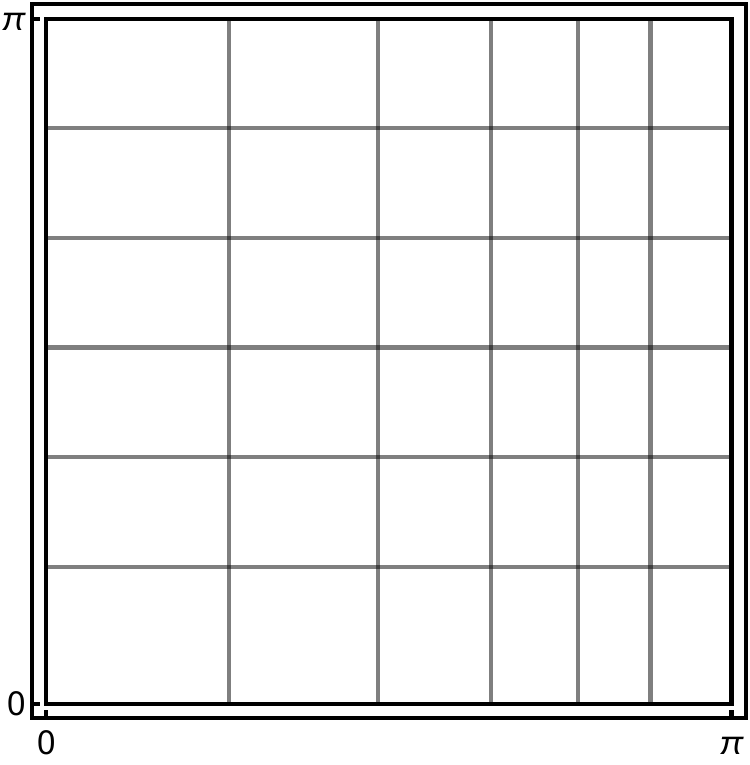}
  }
  \subfloat[Error estimates vs $N$.]{
  \includegraphics[height=1.5in]{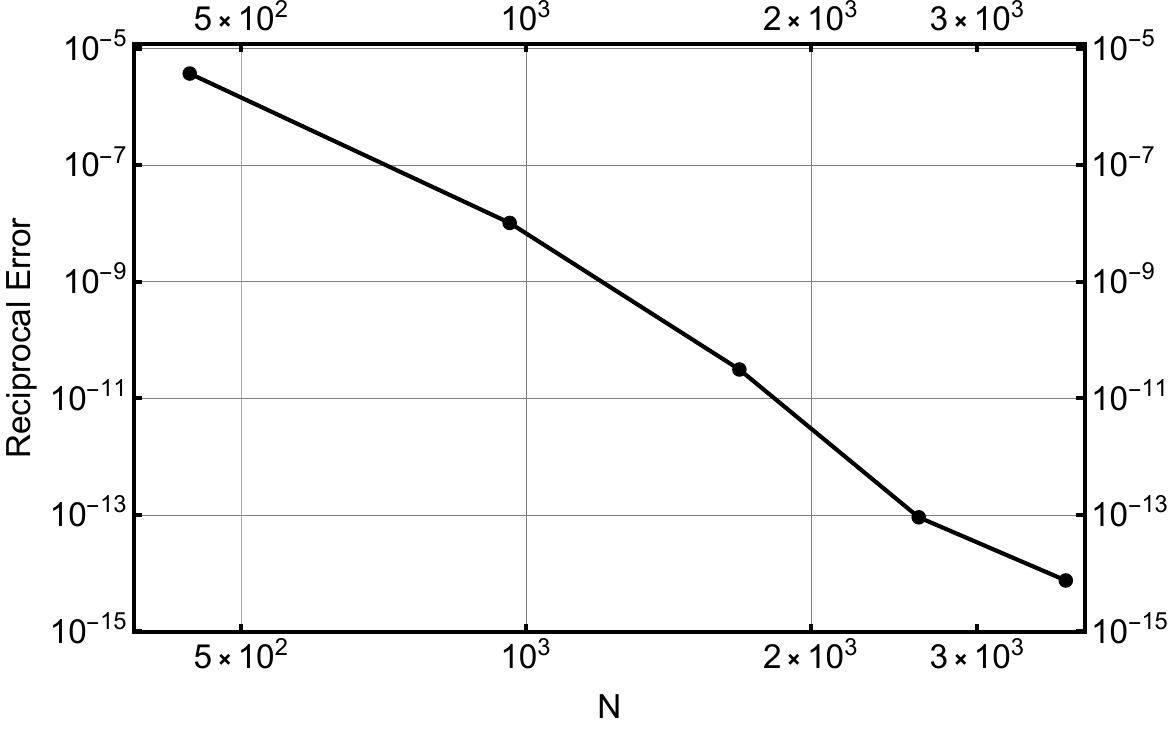}
  }
  \caption{Torus. (a) Map on the parameter space. (b) Convergence in the non isothermal parameterisation, error estimates vs. $N$  (the number of degrees of freedom). 
  Reciprocal error estimate: Solid line.
}\label{fig:torus}
\end{figure}

\subsection{Two examples: Genus 2 and Genus 3}
Finding conformal mappings on arbitrary surfaces is equivalent to
quad meshing on surfaces of arbitrary genus. Euler characteristic
cannot be satisfied except for a case of simple torus. In other words,
extraordinary points are inevitable in the general case and at best one can find
a quasiconformal map.
The conjugate function method can be applied to surfaces of higher genus
provided that suitable charts can be found. 
One pair of such realizations is shown in Figure~\ref{fig:DTMap}.
The 2-torus is defined by
\begin{equation}
  \frac{3}{32} \left(x^2+2 y^2\right)^2+\frac{1}{2
   \left(\left(x-\frac{3}{4}\right)^2+y^2\right)+\frac{1}{2}}+\frac{1}{2
   \left(\left(x+\frac{3}{4}\right)^2+y^2\right)+\frac{1}{2}}+2 z^2=\frac{13}{10}
\end{equation}
and the 3-torus by
\begin{align}
\frac{1}{24} \left(x^2+y^2\right)^2+\frac{2}{4 x^2+4 (y-2) y+5}+ & \frac{2}{4 x
 \left(x-\sqrt{3}\right)+4 y (y+1)+5}+ \\
 & \hspace{-5ex}\frac{2}{4 x \left(x+\sqrt{3}\right)+4 y \nonumber
 (y+1)+5}+z^2=\frac{13}{10}.
\end{align}

\begin{figure}
  \centering
  \subfloat{
    \includegraphics[height=2in]{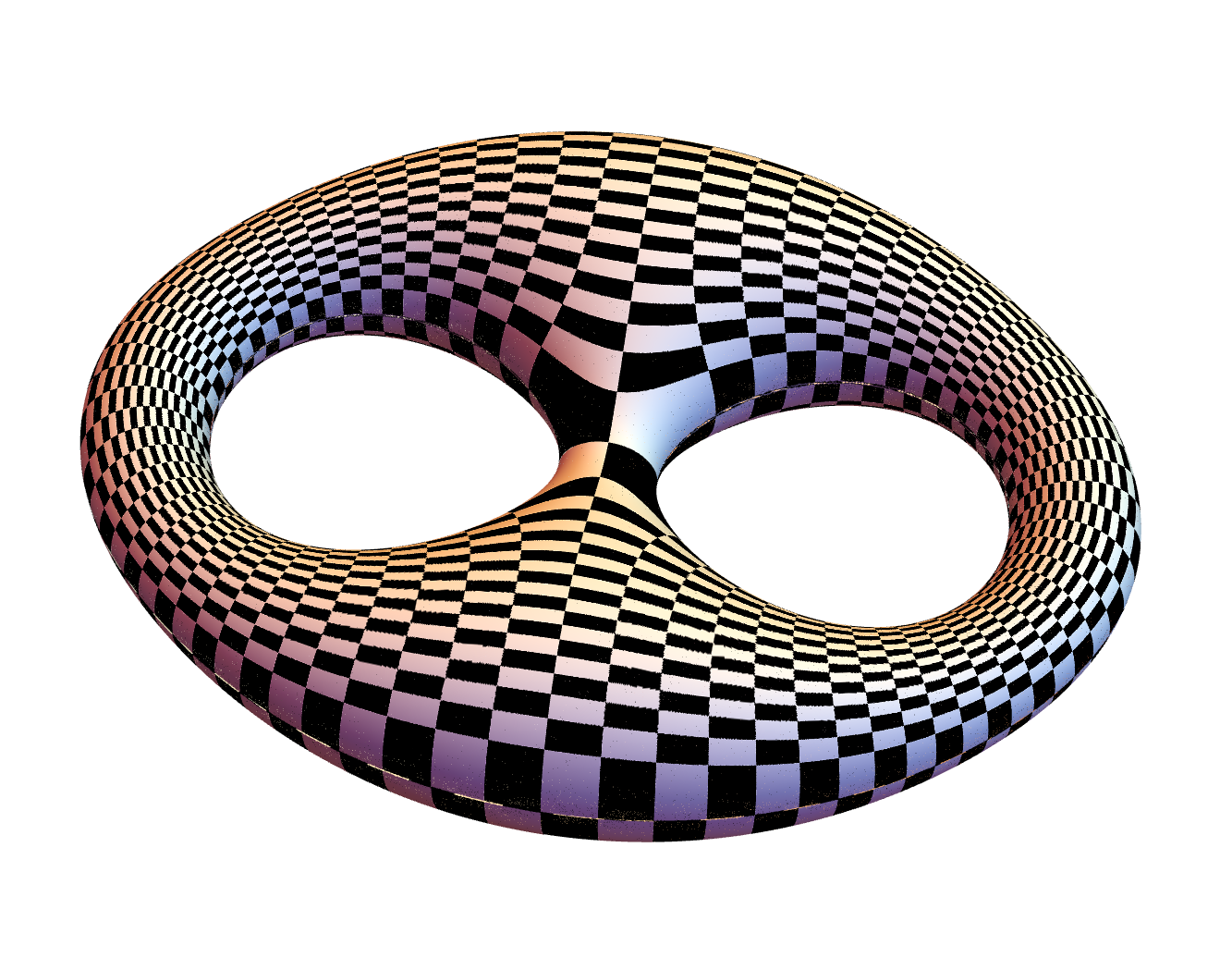}
    \includegraphics[height=2in]{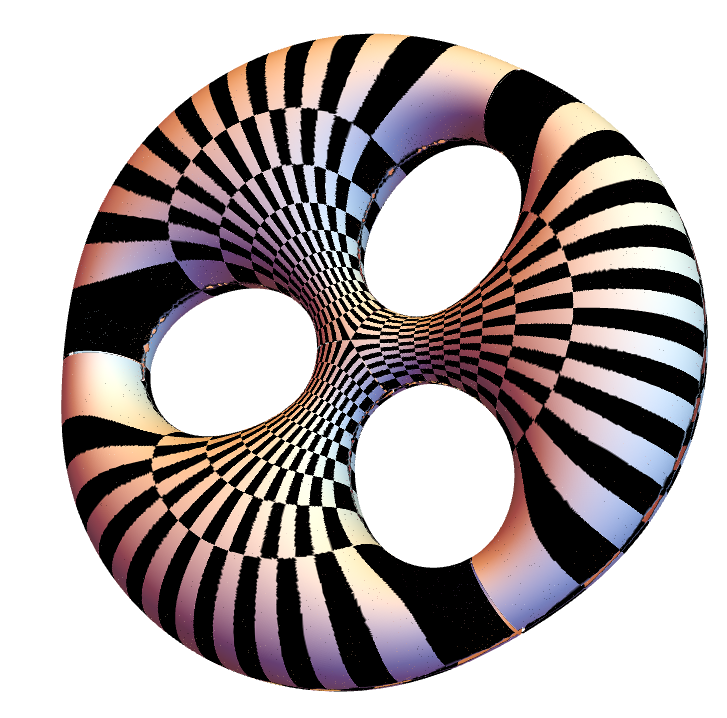}
  }
  \caption{Double and Triple Tori. Possible realizations.}\label{fig:DTMap}
\end{figure}

Let us consider the 2-torus in more detail. What is it that we see in Figure~\ref{fig:DTMap}?
The image is composed of eight sections using symmetries.
Let $k_f$ be the number of edges of a face $f$ and $\operatorname{val}(v)$ the valence of a vertex $v$.  
Since
\begin{equation}\label{eq:euler}
\sum_{v} \operatorname{val}(v) \;=\; 2E \;=\; \sum_{f} k_f,
\end{equation}
and Euler’s relation states that $\chi = V - E + F$, one obtains
\begin{equation}\label{eq:euler2}
\sum_{v}\bigl(4-\operatorname{val}(v)\bigr)\;-\;\sum_{f}\bigl(k_f-4\bigr)\;=\;4\chi.
\end{equation}
Equivalently,
\[
\sum_{v}\bigl(4-\operatorname{val}(v)\bigr)\;=\;4\chi \;+\;\sum_{f}\bigl(k_f-4\bigr).
\]
Using standard terminology, the left-hand side is the \emph{sum of vertex defects} that the extraordinary vertices contribute, and  
$\sum_{f}(k_f-4)$ is the \emph{sum of face defects}, and specifically a pentagon gives $+1$.
For a closed orientable surface of genus $g$, 
\[
\chi \;=\; 2 - 2g.
\]
With $g=2$, we have $\chi = -2$, so $4\chi = -8$.
If all faces are quads ($k_f=4$ for all $f$), then $\sum_f(k_f-4)=0$.  
Now, \eqref{eq:euler2} gives
\[
\sum_v (4-\operatorname{val}(v)) \;=\; -8,
\]
so we need for a valid subdivision of the surface, for example, eight pentagons. By close inspection of the
checkerboard image of the 2-torus, one can see that there is, indeed, one pentagon per every symmetric section, and
hence, we conclude that the construction is valid.

\section{Numerical Experiments on Domains with Multiple Boundary Components}
\label{sec:numex}
The efficiency of the conjugate function method on surfaces has been established already before \cite{24M1656840}.
In this section the focus is on problems with multiple boundary components both on planar domains and surfaces.
The first experiment was originally proposed by Nasser \cite{Nasser}. The latter two are variations of the same theme:
slit domains with strong singularities. First a selection of slits in a rectangle is considered and then lifted
onto a hemisphere that passes through the corner points. All geometric specifications are available from the authors upon request.
The three cases are outlined in Table~\ref{tab:experiments}.

\begin{table}
  \caption{Description of the three numerical experiments. The number of elements includes both triangles and quadrilaterals.
The polynomial order $p$ is the maximal used in the experiments and the number of degrees of freedom
corresponds to this value. In Cases B and C, the mesh used is exactly the same. }\label{tab:experiments}
  \centering
  \begin{tabular}{llrrrr}
    \toprule
    Case & Surface & \#holes & \#(nodes,edges,elements) & $p$ & \# dof \\ \midrule
    A & plane      & 35 & (23336,64018,40639) & 8  & 2,062,192\\ 
    B & plane      & 50 & (8931,17080,8100)   & 10 & 1,163,711\\ 
    C & hemisphere & 50 & (8931,17080,8100)   & 10 & 1,163,711\\ \bottomrule
  \end{tabular}
\end{table}
\subsection{Case A: Nasser's Challenge}
In this example 35 holes are punched inside a disk. The holes are carefully selected to include
either, one, two, or four reentrant corners or cusps.

Together with the map on the domain (Figure~\ref{fig:nasser}) and the canonical domain (Figure~\ref{fig:nasserDataA})
one can trace the paths from edge to edge where the gridlines do or do not touch any of the holes.
\begin{figure}
  \centering
  \includegraphics[width=0.6\textwidth]{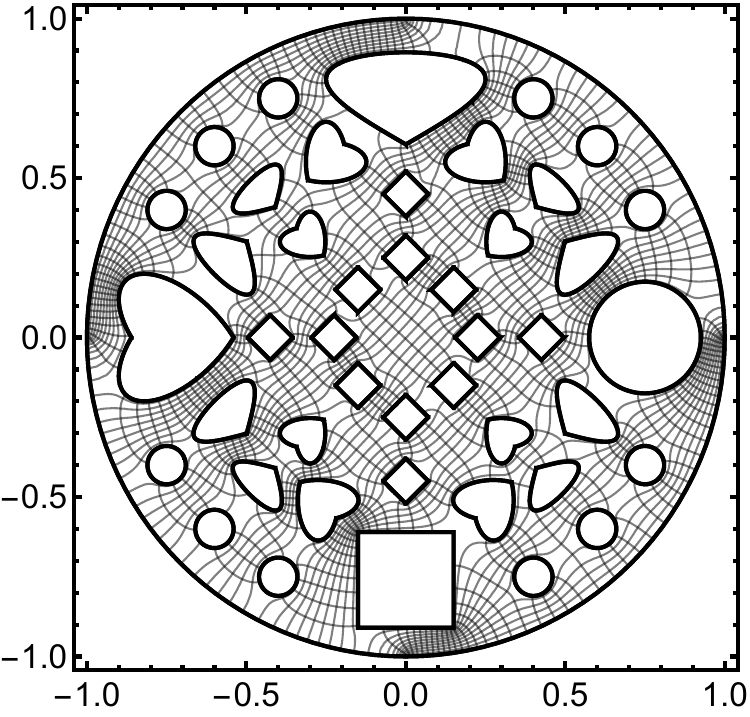}
  \caption{Nasser's Configuration: Map}\label{fig:nasser}
\end{figure}
The convergence graphs are shown in Figure~\ref{fig:nasserDataC}. 
Since the mesh (see Figure~\ref{fig:nasserDataB}) is generated on the original boundary discretisation,
we cannot expect exponential convergence. Indeed, even though the reciprocal error is relatively small,
the observed rate is only algebraic.
The auxiliary subspace estimates for the primary and conjugate problems
are optimistic. Notice that since the reciprocal error measures a product, 
it should stay above the auxiliary subspace estimates of both the primary and the
conjugate problems.

\begin{figure}
  \centering
  \subfloat[Canonical domain.]{\label{fig:nasserDataA}
    \includegraphics[width=0.45\textwidth]{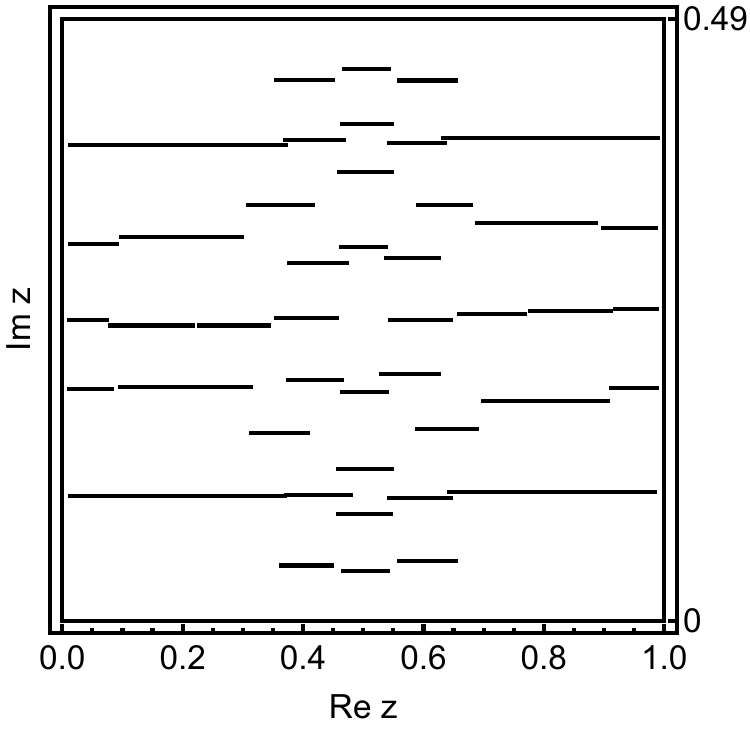}
  }
  \subfloat[Finite element mesh.]{\label{fig:nasserDataB}
    \includegraphics[width=0.45\textwidth]{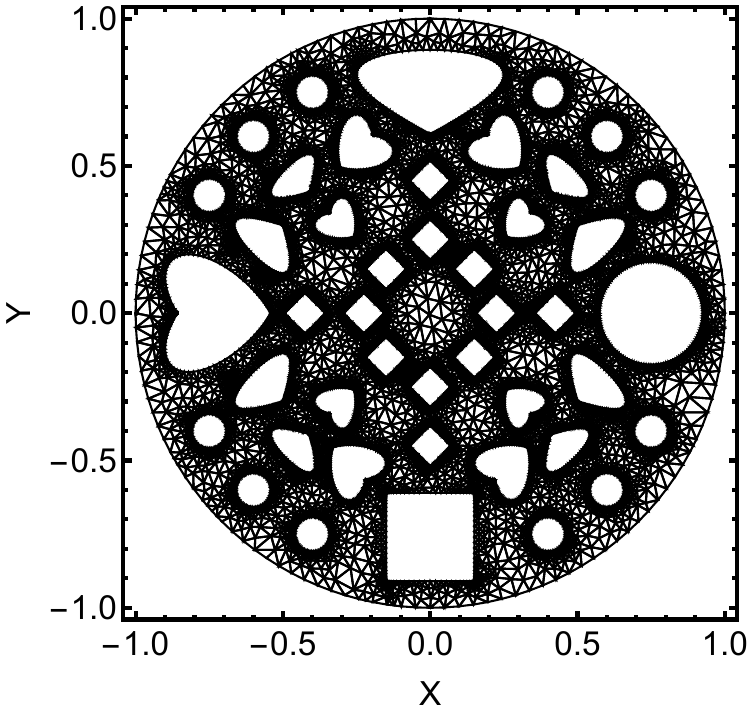}
  }
  \\
  \subfloat[{Convergence: Error estimates vs $N$.}]{\label{fig:nasserDataC}
    \includegraphics[height=1.75in]{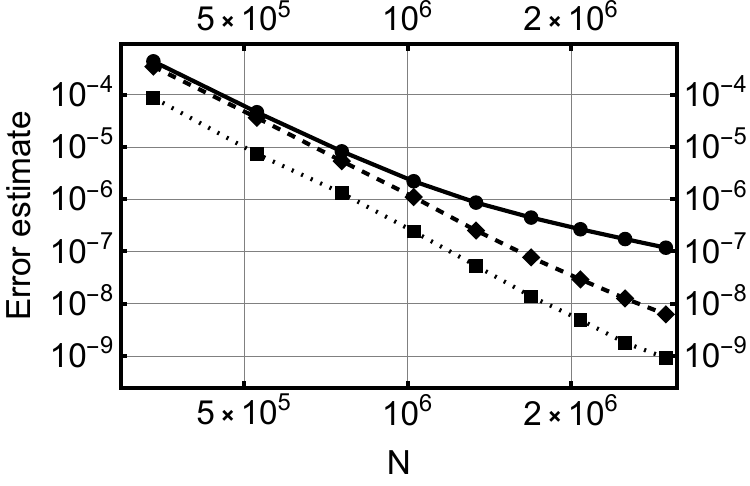}
  }
  \caption{Nasser's Configuration. (a) Canonical domain. (b) Mesh used in computations.
(c) Convergence graphs for error estimates. $N$ is the number of degrees of freedom that depends on the mesh and the polynomial order.
  The data points represent the results on a fixed mesh with the constant polynomial order ranging from $p=2,\ldots,10$.
Solid line: Reciprocal error; Dashed and dotted: Auxiliary subspace estimates for the primary and conjugate problems, respectively (loglog-plot).}\label{fig:nasserData}
\end{figure}

\subsection{Case B: Random Segments Inside a Rectangle}
As discussed above, with proper grading of the meshes, the $hp$-FEM should converge exponentially even
when strong singularities are present. Here 50 slits are placed at random in a $40\times 40$ rectangle
resulting in 100 $2\pi$-corners inside the domain.

The resulting map is shown in Figure~\ref{fig:segmentsA}. The detail shown in Figure~\ref{fig:segmentsAb}
shown how the slit perturbs the ``flow lines'' locally. The slit canonical domain is illustrated in 
Figure~\ref{fig:segmentsBa}. Together with the map on the domain (see Figure~\ref{fig:segmentsAa})
one can trace the paths from edge to edge where the gridlines do or do not touch any of the slits.

\begin{figure}
  \centering
  \subfloat[Conformal map on the domain.]{\label{fig:segmentsAa}
  \includegraphics[height=1.5in]{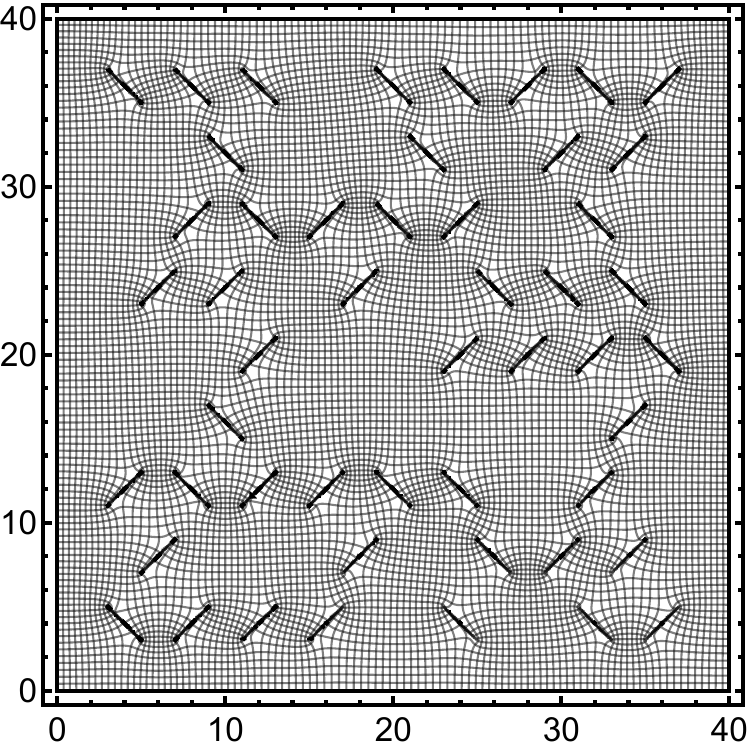}
  }\quad
  \subfloat[Detail of the map.]{\label{fig:segmentsAb}
  \includegraphics[height=1.5in]{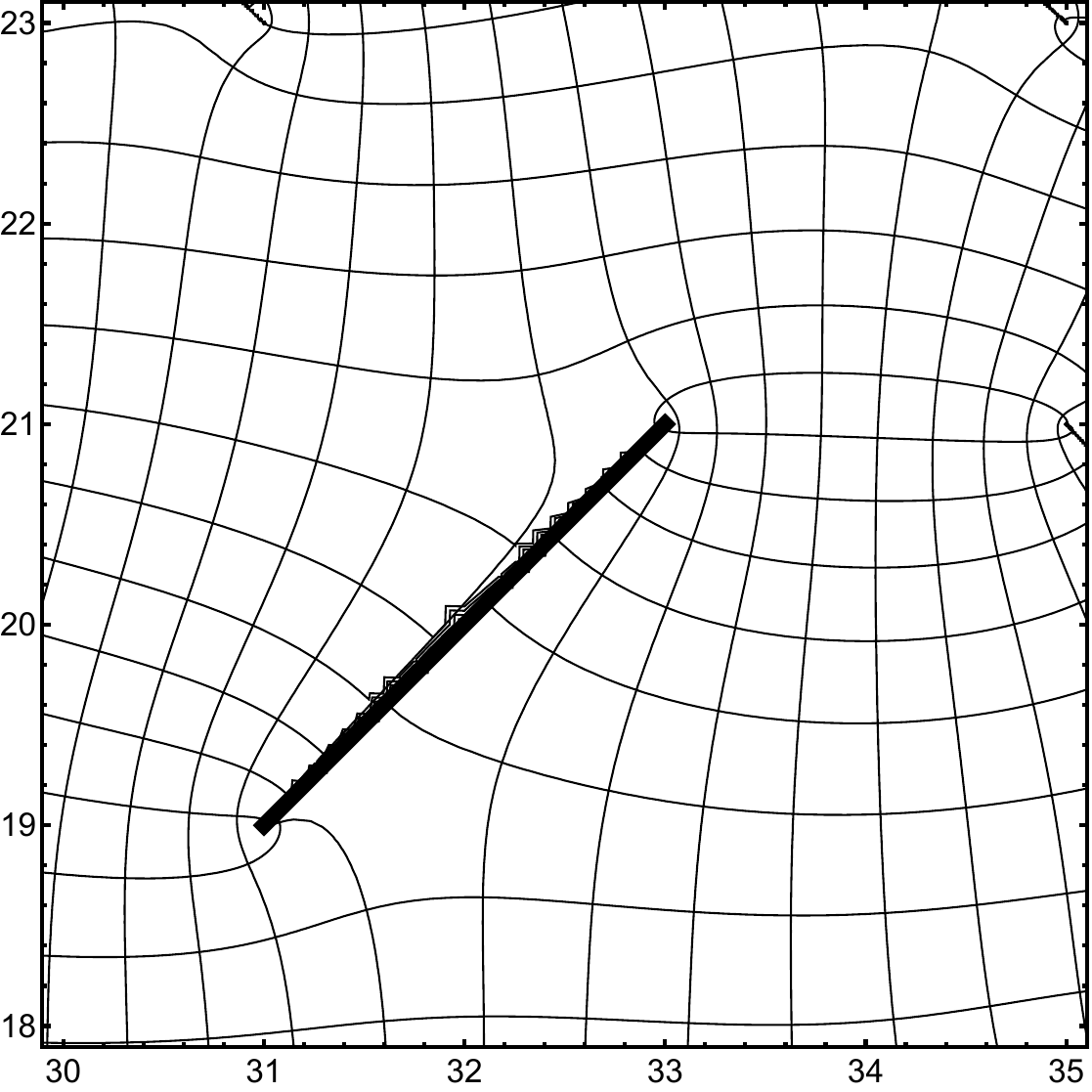}
  }
  \caption{Random Segments. Conformal map with a detail.}\label{fig:segmentsA}
\end{figure}

The convergence graphs are shown in Figure~\ref{fig:segmentsBb}. On a graded mesh with a constant polynomial order,
we observe exponential convergence in the reciprocal error. The auxiliary subspace estimates for the primary and conjugate problems
exhibit the same rate giving us high confidence in the results.

\begin{figure}
  \centering
  \subfloat[Canonical domain.]{\label{fig:segmentsBa}
  \includegraphics[height=1.5in]{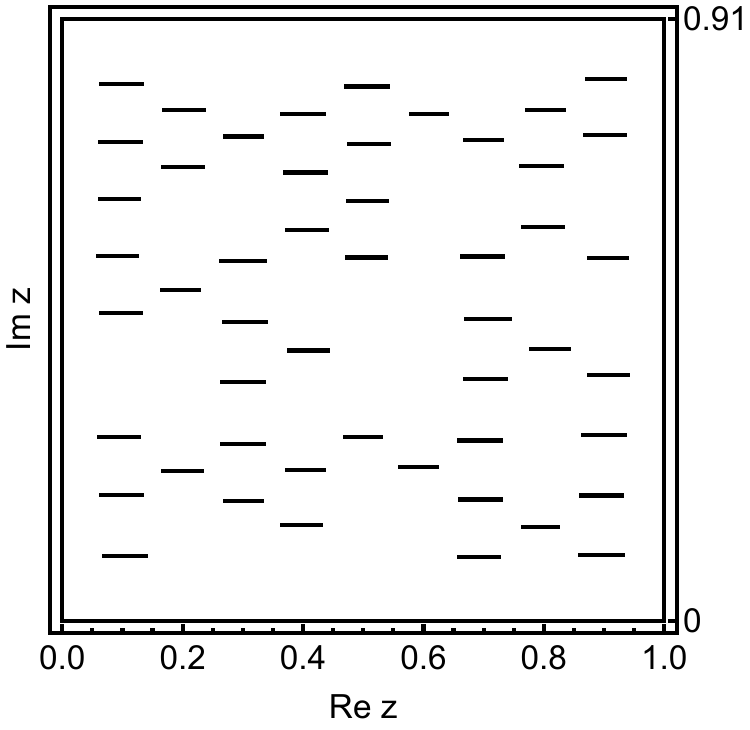}
  }
  \subfloat[{Convergence: Error estimates vs $N$.}]{\label{fig:segmentsBb}
  \includegraphics[height=1.5in]{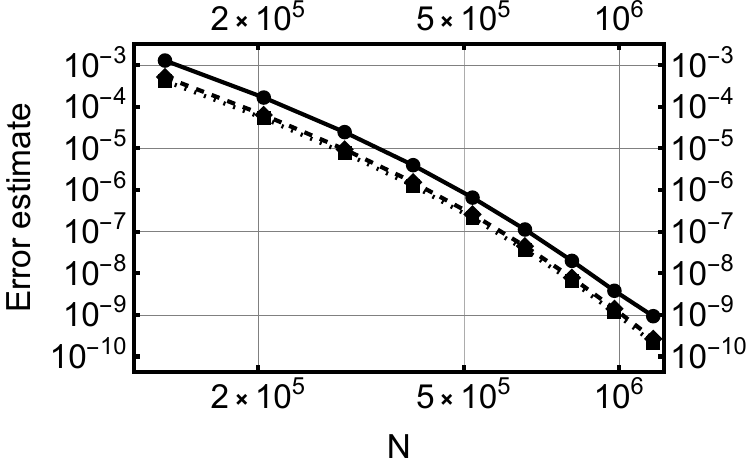}
  }
  \caption{Random Segments. (a) Canonical slit domain. (b) Convergence graphs for error estimates. $N$ is the number of degrees of freedom that depends on the mesh and the polynomial order.
  The data points represent the results on a fixed mesh with the constant polynomial order ranging from $p=2,\ldots,10$.
Solid line: Reciprocal error; Dashed and dotted: Auxiliary subspace estimates for the primary and conjugate problems, respectively (loglog-plot).}\label{fig:segmentsB}
\end{figure}

\subsection{Case C: Random Segments on a Hemisphere}
We continue with the same planar configuration. However, now the configuration represents the parameter space that is lifted onto a
conforming hemisphere (Figure~\ref{fig:ssegmentsA}). 
Interestingly, the convergence behaviour of the $hp$-FEM is practically unchanged despite the added
complexity of the Laplace-Beltrami problem (Figure~\ref{fig:ssegmentsBc}). 
This result underlines the fact that the method for constructing the conjugate problem
depends only on the discretisation and only indirectly on the underlying variational formulation. 

Of particular interest is the comparison of the two canonical domains shown in Figure~\ref{fig:ssegmentsBb}.
If one considers the two configurations, planar and surface, as extremal configurations, it is clear that the
intermediate stages in continuous lifting would result in smooth transition from one to another.
This immediately suggests applications based on transformations on the canonical domain.

\begin{figure}
  \centering
  \subfloat[Conformal map on the parameter domain.]{
  \includegraphics[height=1.5in]{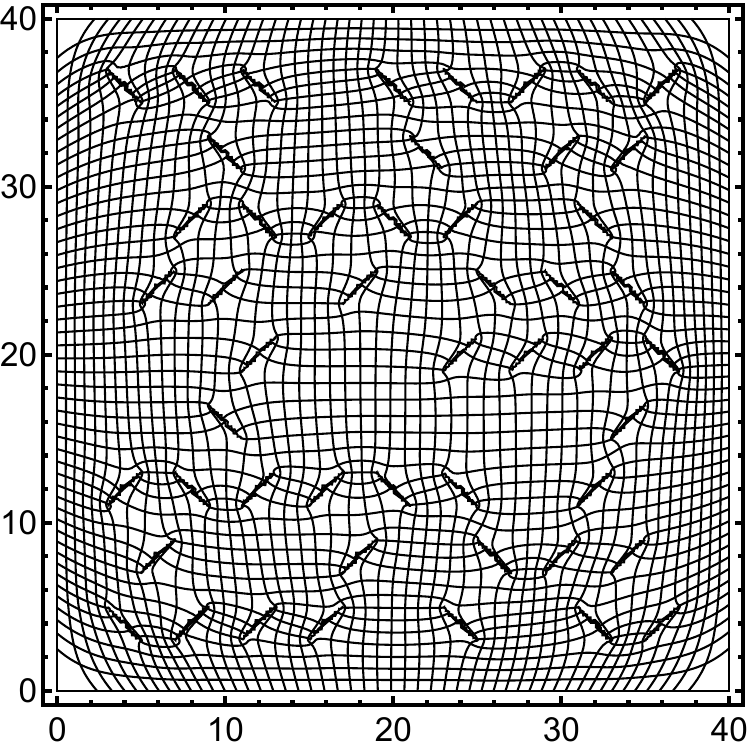}
  }\quad
  \subfloat[Conformal map on the domain.]{
  \includegraphics[height=1.5in]{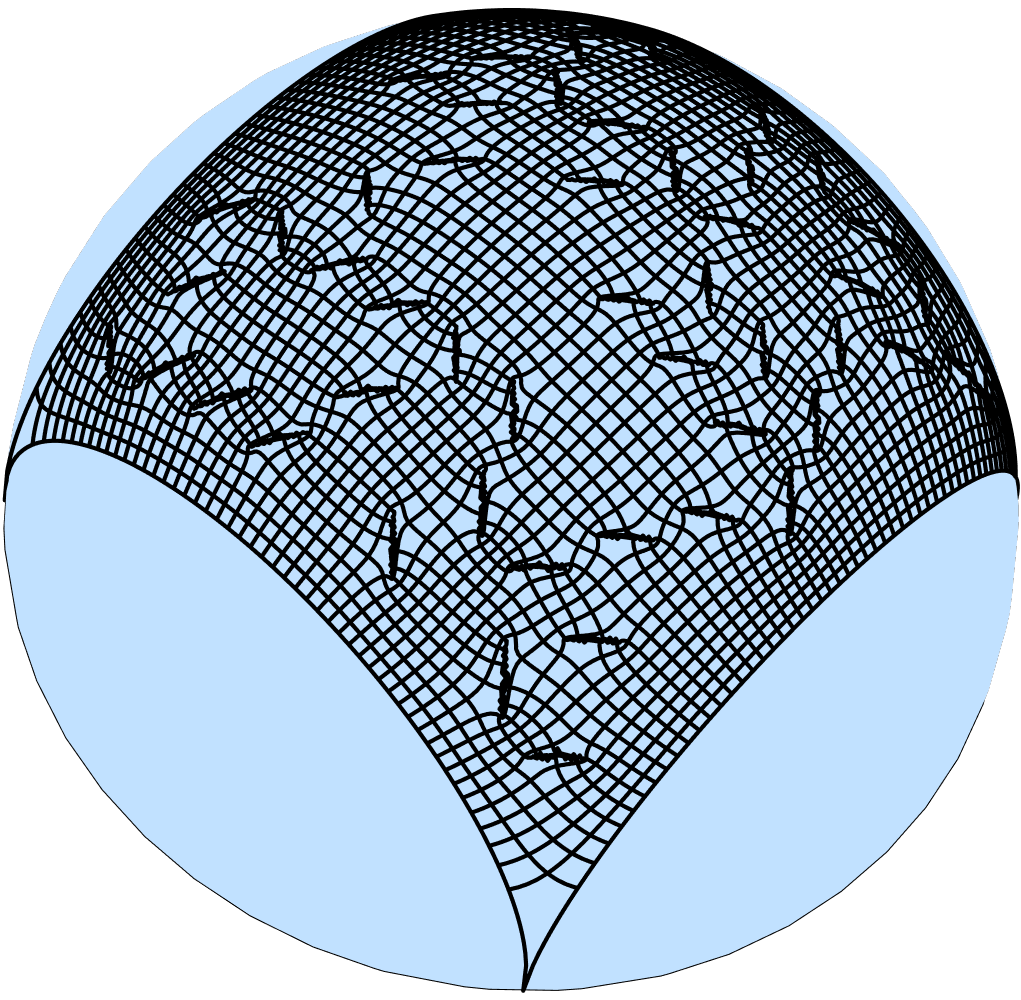}
  }
  \caption{Random Segments on Hemisphere.}\label{fig:ssegmentsA}
\end{figure}

\begin{figure}
  \centering
  \subfloat[Canonical domain.]{\label{fig:ssegmentsBa}
  \includegraphics[height=1.5in]{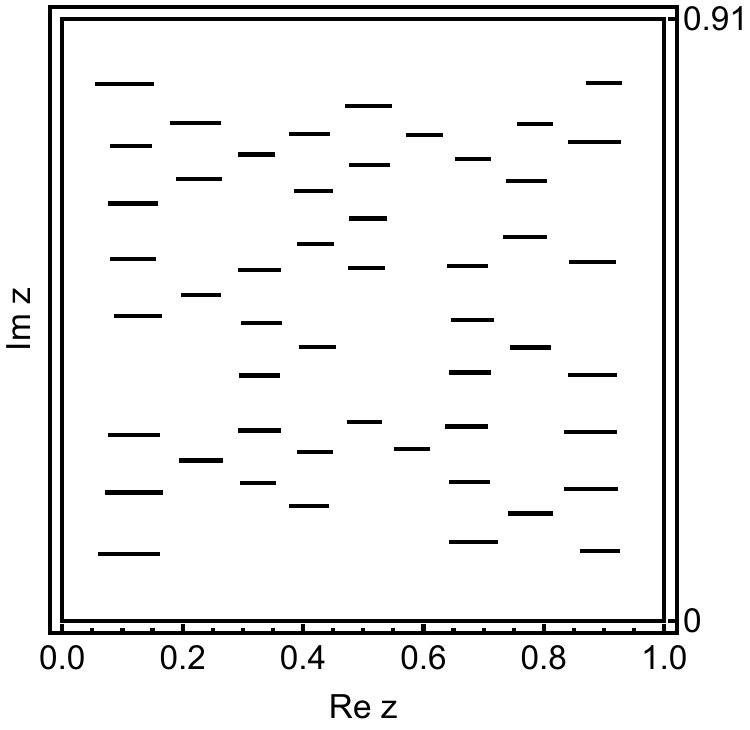}
  }\quad
  \subfloat[Comparison of the canonical domains.]{\label{fig:ssegmentsBb}
  \includegraphics[height=1.5in]{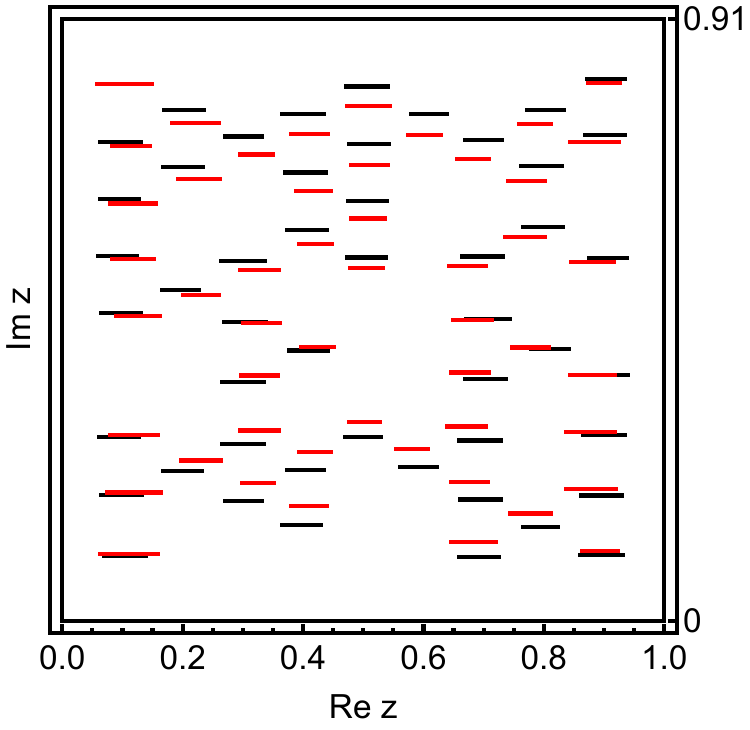}
  }\\
  \subfloat[{Convergence: Error estimates vs $N$.}]{\label{fig:ssegmentsBc}
  \includegraphics[height=1.5in]{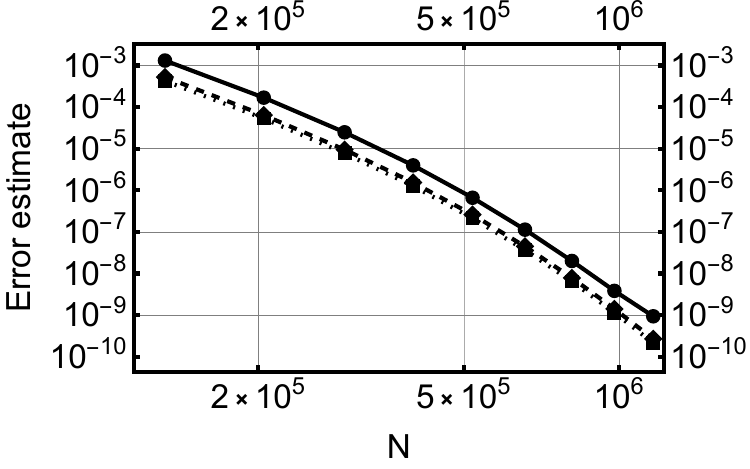}
  }
  \caption{Random Segments on Hemisphere. (a) Canonical slit domain. 
  (b) Comparison of the canonical domains: Black for planar and red for the surface.
  (c) Convergence graphs for error estimates. $N$ is the number of degrees of freedom.
    The data points represent the results on a fixed mesh with the constant polynomial order ranging from $p=2,\ldots,10$.
  Solid line: Reciprocal error; Dashed and dotted: Auxiliary space estimates for the primary and conjugate problems, respectively (loglog-plot).}\label{fig:ssegmentsB}
\end{figure}

\subsection{Comment on Computational Complexity}
The timing data over the random segment case is shown in Figure~\ref{fig:timing}.
Even in the implementation without a native sparse Cholesky decomposition, the construction of the conjugate
problem is roughly comparable to one assembly or integration of the stiffness matrix.
In \eqref{eq:conjquadratic} the latter term could be evaluated more efficiently if the Cholesky decomposition $L_{II}$ of
the matrix $A_{II} = L_{II}L_{II}^T$ were available.

\begin{figure}
  \centering
  \includegraphics[width=0.55\textwidth]{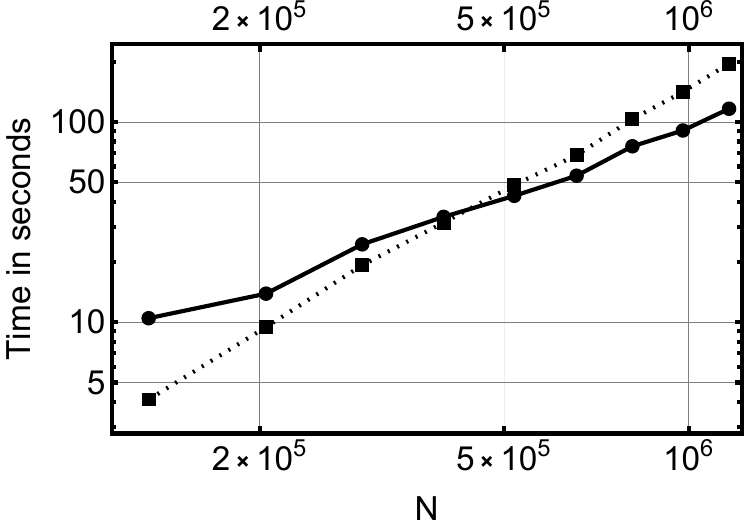}
  \caption{Timing information on Random Segments. The data points represent the results on a fixed mesh with the constant polynomial order ranging from $p=2,\ldots,10$. Solid line: Integration; Dashed: Construction of the conjugate problem (loglog-plot).
  The lack of sparse Cholesky decomposition leads to a bottleneck in \eqref{eq:conjquadratic}.}
  \label{fig:timing}
\end{figure}

\section{Application: Texture Mapping}
In this section we bring together the many aspects considered above.
We use a simplified image of a Chinese opera mask as the model\footnote{This image is derived from the original by Yudhi Sholihana, \url{https://www.vecteezy.com/free-vector/peking-opera}}. In Figure~\ref{fig:mask} this image is lifted onto a surface and
the conformal map is laid onto it.

Since in this example there are two holes (only for the eyes), the canonical domain in Figure~\ref{fig:maskConformala} is relatively simple.
However, the high accuracy of the method is reflected in the mapped image in Figure~\ref{fig:maskConformalb}. 
The intricate details of the design are clearly visible. Combined with the observation on the two canonical domains above,
this strongly suggests that these methods could be used in animation as suggested in \cite{864afe467f9949cea8cb49d061b1b5d6}.
\begin{figure}
  \centering
  \subfloat[Original image.]{\label{fig:maska}
  \includegraphics[height=2in]{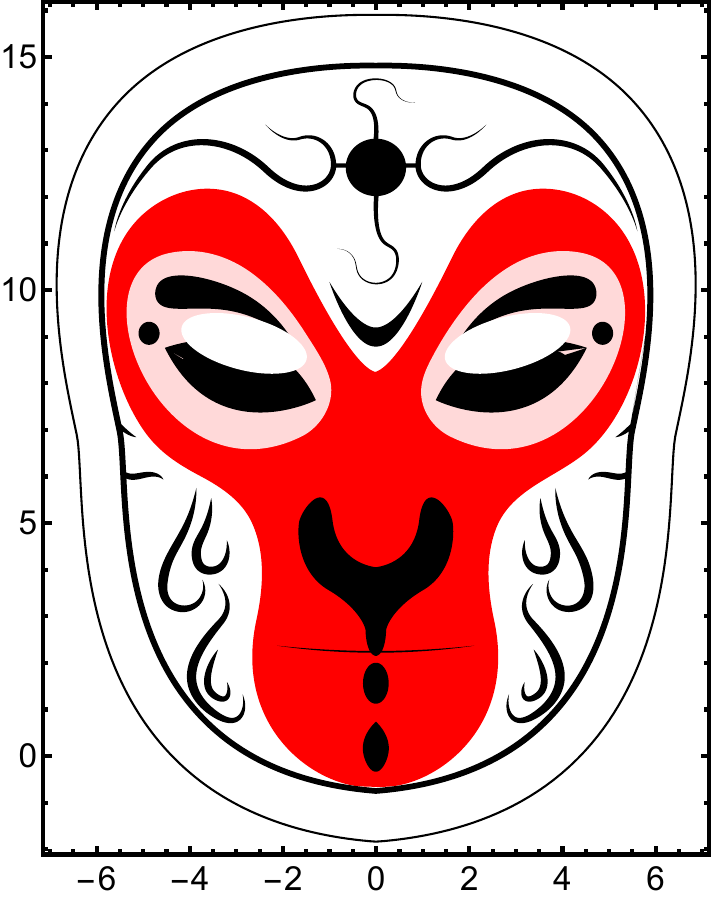}
  }\quad
  \subfloat[Conformal map on the parameter domain.]{\label{fig:maskb}
  \includegraphics[height=2in]{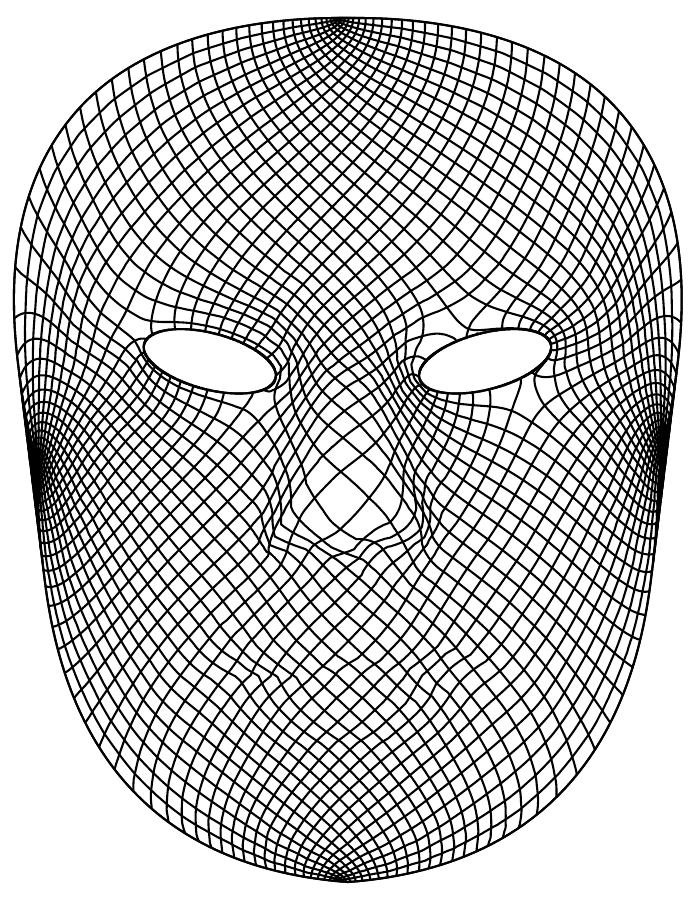}
  }\\
  \subfloat[Conformal map on the domain.]{\label{fig:maskc}
  \includegraphics[height=2.5in]{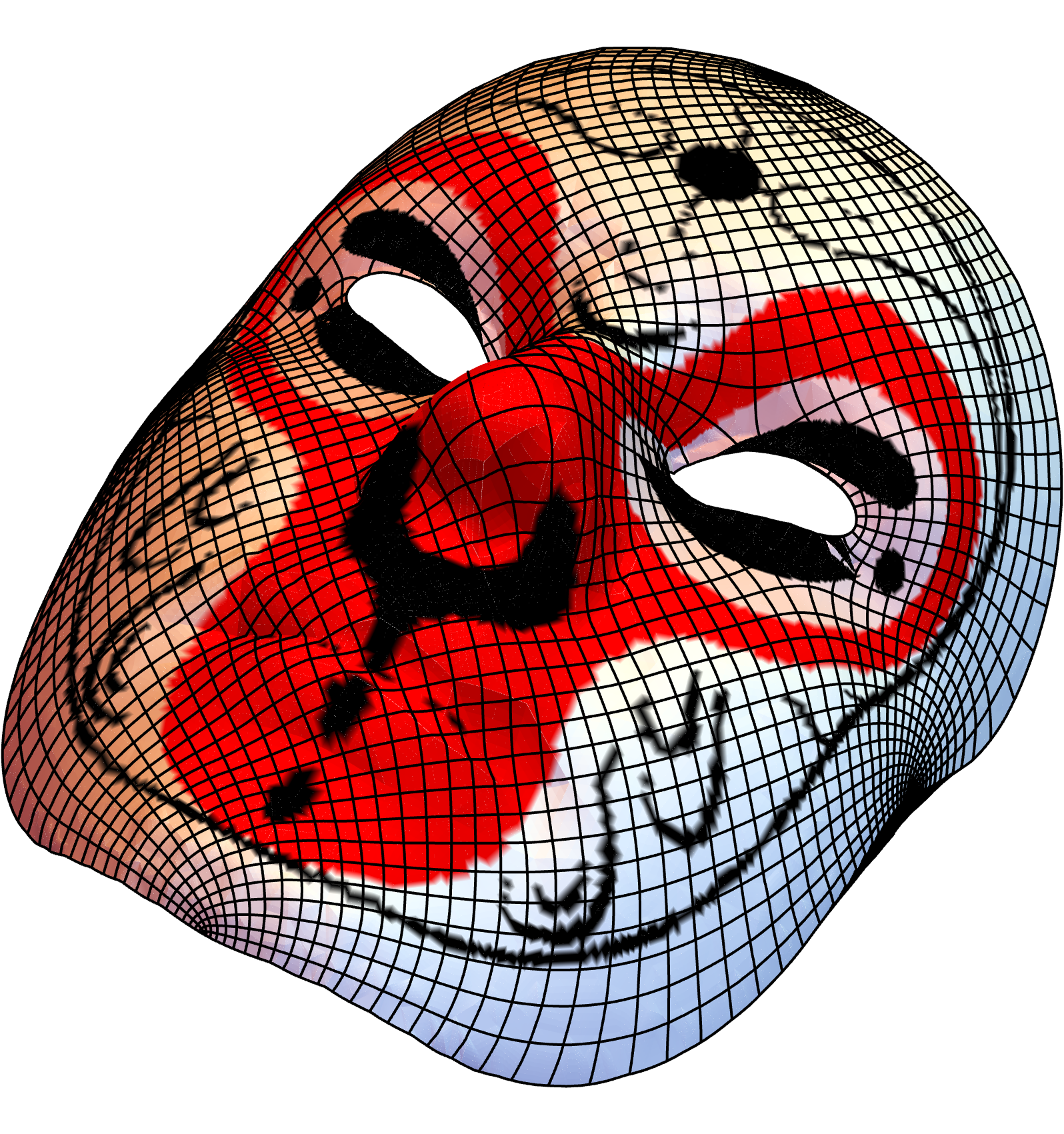}
  }
  \caption{Opera Mask. Conformal map on the domain.}\label{fig:mask}
\end{figure}

\begin{figure}
  \centering
  \subfloat[Canonical domain.]{\label{fig:maskConformala}
  \includegraphics[height=2.3in]{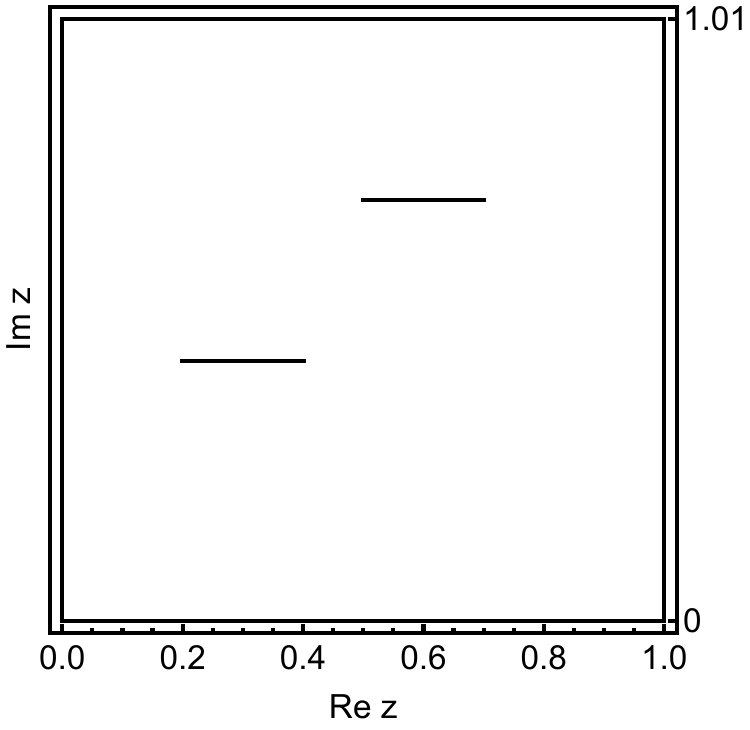}
  }\quad
  \subfloat[Image on the canonical domain.]{\label{fig:maskConformalb}
  \includegraphics[height=2.3in]{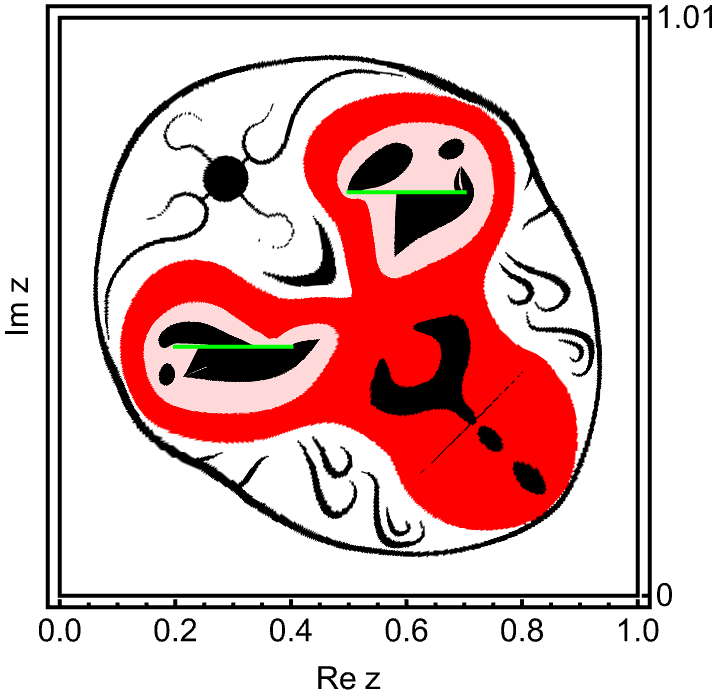}
  }
  \caption{Opera Mask. Image of the mask on the canonical domain. The slits are indicated with green lines.}\label{fig:maskConformal}
\end{figure}

\section{Polyhedral Surfaces}\label{sec:polyhedral}
We conclude the set of examples with a polyhedral closed surface \cite{beethoven3dmodel}. This is illustrated in Figure~\ref{fig:beethoven}.
The original geometric model is triangulated into 5026 3D triangles. The primary and conjugate problems are set simply by dividing the
bottom rim into four sections. The image clearly shows the crowding effect, 
the sizes of the images of the cells of the uniform grid on the canonical are different on different parts of the domain.
Since there is no global parameterization of the surface, in other words, no analytic chart
is available, the variational formulation is adjusted on every surface triangle.
\begin{figure}
  \centering
  \subfloat{
  \includegraphics[height=2.7in]{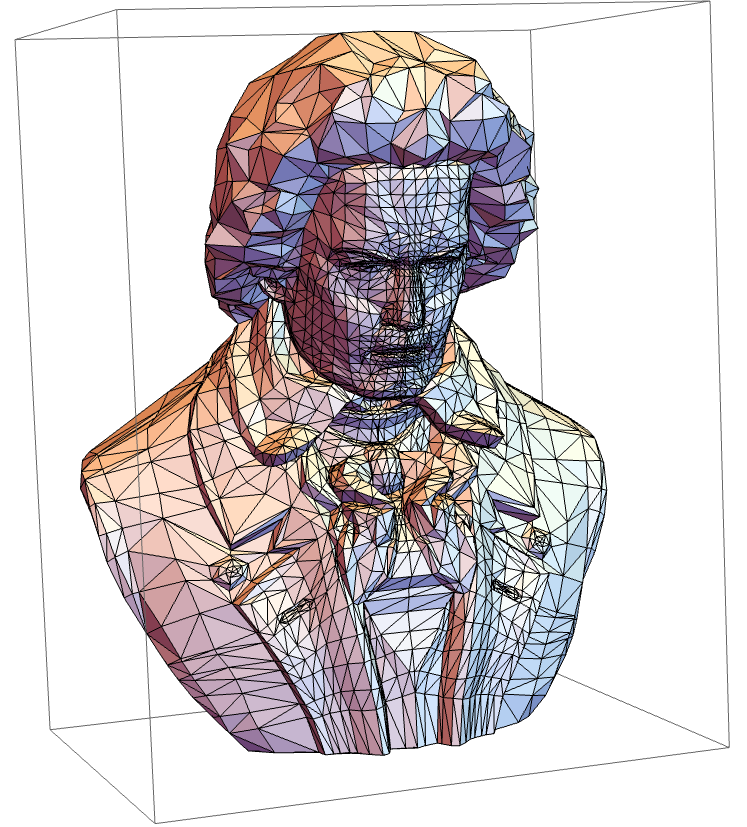}
  }
  \subfloat{
  \includegraphics[height=2.73in]{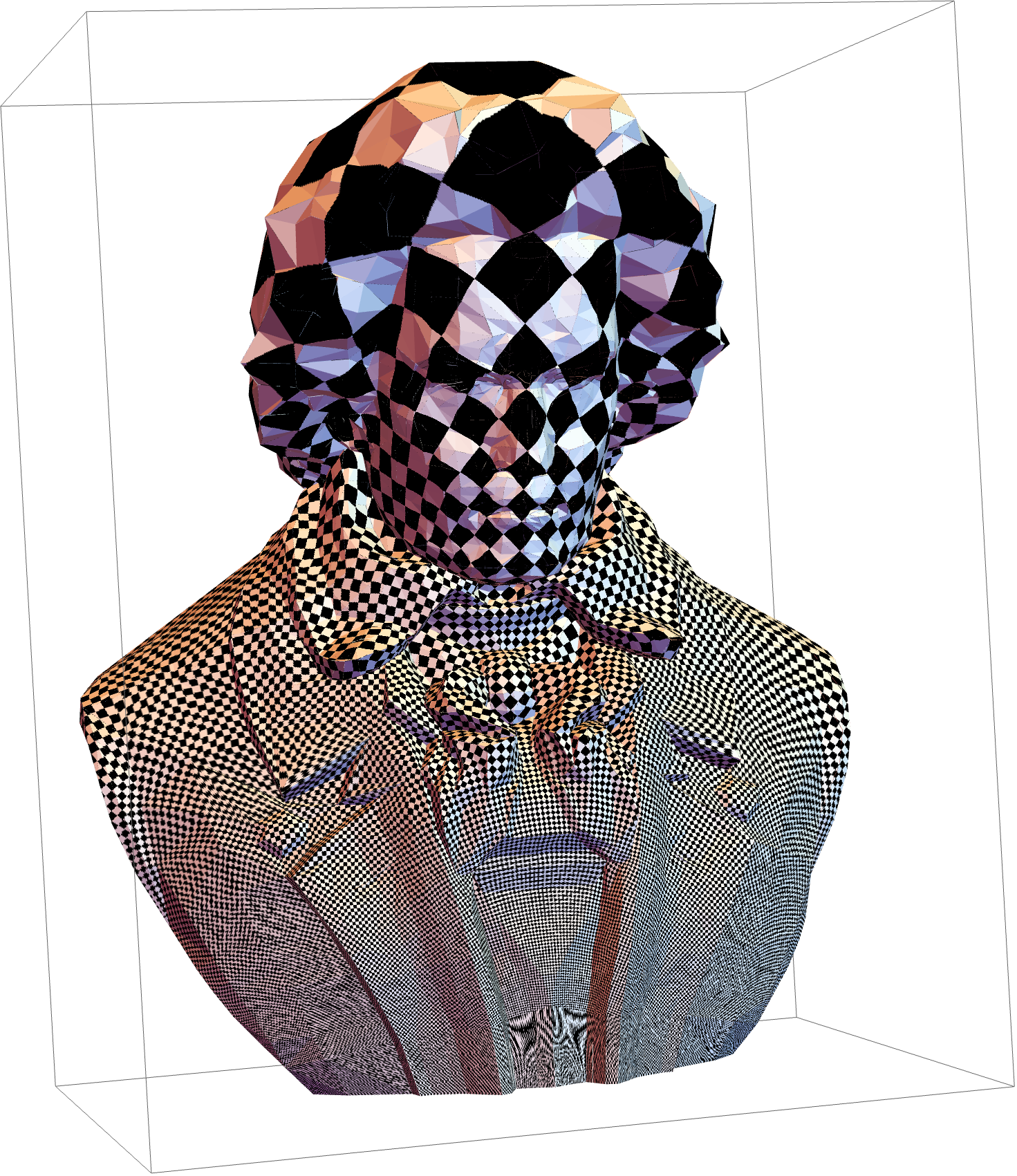}
  }
  \caption{Polyhedral surface. The geometric model on the left is triangulated into 5026 3D triangles. Using the bottom of
the statue as the boundary, a conformal map of the surface is computed.}\label{fig:beethoven}
\end{figure}

\section{Conclusions}
\label{sec:conclusions}
In this work we have generalized and refined the conjugate function method
to achieve the same level of accuracy on simply and multiply connected planar domains and Riemann surfaces.
In contrast to the Koebe iteration, the construction presented here can be viewed as direct, with predictable
a priori computational complexity. Our implementation relies on high-order finite element methods, both for
obtaining high accuracy and the construction of the conjugate problem. This does not mean that the key observation
in the minimisation process depends on one particular method for solving the underlying partial differential equations.

Extension to closed high-genus surfaces remains a future challenge. Automatic detection of charts for an atlas is a very difficult problem and
we have not made any attempts to address this question. In cases where the symmetries suggest parameterisations, 
such as the 2-torus and 3-torus considered above, the conjugate function method can be used to
compute highly accurate quasiconformal maps.

\subsection*{Acknowledgments}
We thank warmly prof. M.M.S. Nasser for kindly providing us with one of the
geometric configurations used in one of the experiments.

H. Hakula was supported by the Research Council of Finland (Flagship of Advanced Mathematics for Sensing Imaging and Modelling grant 359181).

A. Rasila and Y. Zheng were supported by NSF of Guangdong Province (No. 2024A1515010467), Li~Ka~Shing Foundation GTIIT-STU Joint Research Grant (No. 2024LKSFG06), and GTIIT Education Foundation.

\bibliographystyle{siamplain}
\bibliography{references, Yau}
\end{document}